\newcommand{\R}{\mathbb{R}}
\newcommand{\Z}{\mathbb{Z}}
\newcommand{\calS}{{\cal S}}
\newcommand{\calF}{{\cal F}}
\newcommand{\calM}{{\cal M}}
\newcommand{\Q}{\mathbb{Q}}
\newtheorem{theorem}{Theorem}[section]
\newtheorem{question}[theorem]{Question}
\newtheorem{lemma}[theorem]{Lemma}
\newtheorem{remark}[theorem]{Remark}
\newtheorem{proposition}[theorem]{Proposition}
\newtheorem{definition}[theorem]{Definition}
\newtheorem{example}[theorem]{Example}
\def\@begintheorem#1#2{
 \par\noindent\bgroup{\sc #1\ #2. }\it\ignorespaces}
\def\@beginremark#1#2{
 \par\noindent\bgroup{\sc #1\ #2. }\ignorespaces}
\def\@opargbegintheorem#1#2#3{
 \par\bgroup{\sc #1\ #2\ (#3). }\it\ignorespaces}
\def\@endtheorem{\egroup}
\renewcommand{\span}{ {\rm span}\, }
\newcommand{\spa}{{\rm \overline{span} \,}}
\newcommand{\vol}{ {\rm vol }\, }
\newcommand{\supp}{{\rm supp}\, }
\DeclareMathOperator*{\esssup}{ess\,sup}
\begin{document}

%
%
\title{\bf\vspace{-39pt} A time-frequency density criterion for operator identification}

%
%

\author{Niklas Grip \\ \small Department of Mathematics, Lule{\aa} University of Technology \\
\small 97187 Lule{\aa}, Sweden \\ \small niklas.grip@ltu.se\\
\\
G\"{o}tz E. Pfander \\ \small School of Engineering and Science, Jacobs University Bremen \\
\small 28759 Bremen, Germany \\ \small g.pfander@jacobs-university.de\\
\\
Peter Rashkov \\ \small Fachbereich Mathematik und Informatik, Philipps-Universit\"at Marburg \\
\small 35032 Marburg, Germany \\ \small rashkov@mathematik.uni-marburg.de}

%
%
\date{}

%
%
\maketitle
\thispagestyle{fancy}

%

%
%

\begin{abstract}
We establish  a necessary   density  criterion for the identifiability of time-frequency structured   classes of Hilbert-Schmidt operators. The density condition  is based on the density criterion for Gabor frames and Riesz bases in the space of square integrable functions.  We complement our findings with examples of identifiable operator classes. \vspace{5mm} \\
\noindent {\it Key words and phrases} : Operator identification, density criteria, Gabor frames, atomic Hilbert-Schmidt operator decompositions, sampling of operators.
\vspace{3mm}\\
\noindent {\it 2000 AMS Mathematics Subject Classification} 42C15, 47G10, 94A15, 94A20.
\end{abstract}

%
%

\section{Introduction}

The goal in operator identification is to recover an incompletely known operator
from its action on a single input signal \cite{KP06,PW06}. In mathematical terms: for a normed linear space of operators $\mathcal{Z}$ mapping a set $X$ into a normed linear space $Y$, we seek $g\in X$ such
that the induced evaluation map
\begin{equation*}
 \Phi_g:\mathcal{Z} \to Y,\qquad H\mapsto Hg,
\end{equation*}
is bounded and boundedly invertible on its range. In short, we require that for some $g\in X$ there exist positive constants $A$ and $B$ with
\begin{equation*}
A\|H\|_{\mathcal{Z}}\leq \|Hg\|_Y\leq B \|H\|_{\mathcal{Z}},\quad H\in \mathcal{Z}.
\end{equation*}
This and similar inequalities will be represented  by 
\begin{equation*}
\|Hg\|_Y\asymp\|H\|_{\mathcal{Z}},\quad H\in \mathcal{Z},
\end{equation*}
from now on.

Identification of operators is important in numerous applications. For example, in mobile radio communications it is desirable to identify an unknown channel operator prior to information transmission. In radar applications, information on a target is gained through analyzing its response to a known sounding signal.

In this paper we focus on Hilbert-Schmidt operators on the space of square integrable functions on $\R$, $L^2(\R)$. Hilbert-Schmidt operators on $L^2(\R)$ are formally given by
\begin{equation*}
 Hf(x) = \iint \eta_H(t,\nu)\,e^{2\pi i\nu (x-t)}\,f(x-t)\,d\nu\,dt =\iint \eta_H(t,\nu)\,\pi_1(t,\nu)f(x)\,d\nu\,dt\notag
\end{equation*}
with $\eta_H\in L^2(\R^2)$. The unitary time-frequency shift $\pi_d(\lambda)$ by $\lambda=(t,\nu)\in \R^{2d}$  is defined by
\begin{equation}\label{eqn:TFshift}
\pi_d(\lambda)f(x)=T_tM_\nu f(x)=e^{2\pi i\nu (x-t)}\,f(x-t), \quad f\in L^2(\R^d).
\end{equation}
The space of Hilbert-Schmidt operators $HS$ inherits the Hilbert space structure from $L^2(\R^2)$ by setting $\langle H, K\rangle_{HS}=\langle \eta_H, \eta_K\rangle_{L^2}$ and $\|H\|_{HS}=\|\eta_H\|_{L^2}$ \cite{FN03,GP08}.

For  $\lambda=(s ,\omega ; z , y )\in\R^4$ and $H_0$ Hilbert-Schmidt with spreading function $\eta_0$ we define the  operator $H_\lambda$ by \begin{eqnarray}\label{eqn:defineHlambda}
 \eta_{H_\lambda}=\pi_2(\lambda)\eta_0=T_{(s ,\omega )}M_{( z , y )}\eta_0.
\end{eqnarray} 
The central goal of this paper is to establish a density criterion on a not necessarily full-rank lattice $\Lambda$ for the identifiability of the closed linear span of 
\begin{equation}\label{eqn:definecHlambda}
  (H_0,\Lambda) =  \{H_\lambda\}_{\lambda\in\Lambda},
\end{equation}
that is, a necessary density condition on $\Lambda$ for the existence of $g$ with
\begin{equation}\label{eqn:identification}
 \|Hg\|_{L^2(\R)}\asymp \|H\|_{HS}, \quad H\in  \overline{\span}(H_0,\Lambda).
\end{equation}

The paper is structured as follows. Section~\ref{sect:preliminaries} recalls general facts on modulation spaces, on Gabor Riesz bases and frames, and on Hilbert-Schmidt operators. In Section~\ref{section:mainresult} we discuss the identification problem outlined above in detail and state our main result, Theorem~\ref{thm:mainresult}.  Theorem~\ref{thm:mainresult} is proven in Section~\ref{sect:proof}. Section~\ref{section:sufficient} provides some examples of identifiable  classes of Hilbert-Schmidt operators. Also, the design of identifiers for the here considered operator families is discussed in Section~\ref{section:sufficient}.

\section{Background}\label{sect:preliminaries}

This section reviews some basic properties of Gabor Riesz sequences and frames in the Hilbert space of square integrable functions $L^2(\R^d)$ and in the space of Hilbert-Schmidt operators on $L^2(\R^d)$.

A countable family of vectors $\{g_\lambda\}_{\lambda\in\Lambda}$ in a  Hilbert space $\mathcal{H}$ is called a \emph{Riesz sequence} if 
\[
 \|\{c_\lambda\}\|_{\ell^2(\Lambda)}\asymp \|\sum_{\lambda\in\Lambda} c_\lambda g_\lambda \|_{\mathcal{H}},\quad \{c_\lambda\}\in\ell^2(\Lambda),
\]
where $\ell^2(\Lambda)$ denotes the space of square summable sequences indexed by $\Lambda$.\footnote{Recall $\ell^2(\Lambda)=L^2(\Lambda,\nu)$ where $\nu$ is the counting measure on $\Lambda$.}

If only $\|\{c_\lambda\}\|_{\ell^2(\Lambda)}\geq A \|\sum_{\lambda\in\Lambda} c_\lambda g_\lambda \|_{\mathcal{H}}$, $ \{c_\lambda\}\in\ell^2(\Lambda)$, for some positive $A$, then we refer to  $\{g_\lambda\}_{\lambda\in\Lambda}$ as \emph{Bessel sequence}. A \emph{Riesz basis} is a Riesz sequence that spans $\mathcal{H}$. 

A countable family $\{g_\lambda\}_{\lambda\in\Lambda}$ is a \emph{frame} for $\mathcal{H}$ if
\begin{equation}\label{eq:frameineqGeneral}
\|f\|_{L^2(\R^d)}\asymp\big\|\{\langle
f,g_\lambda \rangle\}_{\lambda\in\Lambda}\big\|_{\ell^2(\Lambda)},\quad f\in \mathcal{H}.
\end{equation}

The set $\Lambda=M\Z^{2d}\subset\R^{2d}$  with $M$ being a (not necessarily full rank) real $2d\times 2d$ matrix is called {\it lattice}. A Gabor system $(g,\Lambda)$ in $L^2(\R^d)$ is the set of all time-frequency shifts \eqref{eqn:TFshift} of
the window function $g$ by elements $\lambda=(x,\omega)\in \Lambda$, that is,
\[(g,\Lambda)=\{g_\lambda=\pi_d(\lambda)g: \lambda\in\Lambda\}.\]
The set $(g,\Lambda)$ is called \emph{Gabor Riesz sequence}  if it is a Riesz sequence in $L^2(\R^d)$ and a  \emph{Gabor frame} if it is  a frame for $L^2(\R^d)$.

Below, we shall utilize modulation space theory as developed by Feichtinger and Gr\"ochenig \cite{Fei83, Fei89, FG96, Gro01}. Let  $\mathcal S(\R)$ denote the space of Schwarz functions on $\R$ and $\mathcal S'(\R)$ its dual of so-called tempered distributions. Let $v_s$, $s\in \R$, be the polynomial weight function $v_s(z)=(1+|z|)^s$. The weighted mixed-norm sequence space $\ell^1_s(\Lambda)$ contains all sequences $\{c_\lambda\}_{\lambda\in\Lambda}$ with the property that 
\begin{equation}\label{def:wss}
\|\{c_\lambda\}\|_{\ell^{1}_s(\Lambda)}=\sum_{\lambda\in\Lambda}|c_{\lambda}\,v_s(\lambda)|<\infty.
\end{equation}
 The weighted modulation space $M_s^1(\R)$ consists of
all those tempered distributions $f\in{\cal S}'(\R)$ with
\begin{equation*}
\|f\|_{M_s^1}=\int|\langle
f,\pi_1(z)\gamma\rangle\, v_s(z)|\,dz<\infty,
\end{equation*}
with $\gamma(x)=e^{-|x|^2}$, $x\in\R$, and
where we refer to 
\begin{equation}
V_\gamma f(z)= \langle
f,\pi_1(z)\gamma\rangle, \quad z\in\R^2,
\end{equation}
as {\it short-time Fourier transform} of $f$ with respect to the window $\gamma$.

The weighted modulation space $M_s^{\infty}(\R)$ consists of all  $f\in{\cal S}'(\R)$ with norm
\begin{equation}\label{eq:defnmodspaces1}
\|f\|_{M_s^{\infty}}=\sup_{z\in\R^{2}}|\langle
f,\pi_1(z)\gamma\rangle v_s(z)|<\infty.
\end{equation}
 Note that the dual space of $M_s^1$ is $M_{-s}^\infty$. If $s=0$,
we write simply $M^1$ and $M^\infty$. For a detailed treatment of the theory of
modulation spaces we refer to Chapters 11 and 12 of~\cite{Gro01}.
%

%

{\it Hilbert-Schmidt operators} on $L^2(\R)$ are in one-to-one correspondence to their kernel \cite[page 267]{Con90}, and, similarly, they can be represented by their time-varying impulse response $h_H$, their Kohn-Nirenberg symbol $\sigma_H$, and their spreading function $\eta_H$. In fact, formally,
\begin{equation}\label{eqn:operatorrepresentations}
 \begin{aligned}
 Hf(x) &=\int \kappa_H(x,y) f(y)\, dy = \int h_H(t,x)\,f(x-t)\,dt \\ &= \iint \eta_H(t,\nu)\,e^{2\pi i\nu (x-t)}\,f(x-t)\,d\nu\,dt = \int \sigma_H(x, \xi)e^{2\pi i x  \xi} \widehat f(\xi)\,d \xi.
\end{aligned}
\end{equation}
The functions $\kappa_H,h_H,\sigma_H,\eta_H$ are related   by
\begin{equation*}
\begin{aligned}
 \int \eta_H(t,\nu)\, e^{2\pi i \nu (x-t)}\, d\nu =h_H(t,x)=\kappa_H(x,x-t)
=\int \sigma_H (x, \xi)e^{2\pi i  \xi t}\, d \xi,
\end{aligned}
\end{equation*}
and 
\begin{equation*}
 \begin{aligned}
 \|H\|_{HS}=\|\kappa_H\|_{L^2(\R^2)}=\|h_H\|_{L^2(\R^2)}=\|\eta_H\|_{L^2(\R^2)}=\|\sigma_H\|_{L^2(\R^2)}.
 \end{aligned}
\end{equation*}
due to the unitarity of the $L^2$-Fourier transform   $\mathcal F$ which is densely defined by
\begin{equation*}
\mathcal F f ( \xi)=\widehat f( \xi)=\int f(x)e^{-2\pi i x \xi}\,dx.
\end{equation*}
%
%

For reference, we include the definition of  Beurling density of $\R^d$. Let $\mathcal B_d(R)$ denote the ball in $\R^{d}$ centered at 0 with radius
$R$ and    let $|\mathcal M|$ denote the cardinality of the set $\mathcal M$. The lower and
upper Beurling densities of $\Lambda\subseteq\R^d$ are given by
\begin{equation*}
\begin{aligned}
D^-(\Lambda)&=\liminf_{R\to\infty}\inf_{z\in \R^d}\frac{|\Lambda\cap
\{\mathcal B_d(R)+z\}|}{\vol \mathcal B_d(R)}, \\
D^+(\Lambda)&=\limsup_{R\to\infty}\sup_{z\in \R^d}\frac{|\Lambda\cap
\{\mathcal B_d(R)+z\}|}{\vol \mathcal B_d(R)}.
\end{aligned}
\end{equation*}
In case of $D^-(\Lambda)=D^+(\Lambda)$, we
say that $\Lambda$ has Beurling density $D(\Lambda)=D^-(\Lambda)=D^+(\Lambda)$.  Note that the Beurling density of a lattice $\Lambda$  equals
the inverse of the Lebesgue measure of any measurable fundamental domain of $\Lambda$.
See \cite{Kut06} for a more general concept of Beurling density.

 \section{Basic Observations and Main Result}\label{section:mainresult}

If  $ (H_0,\Lambda)$ in \eqref{eqn:definecHlambda} is a Riesz sequence in the space of Hilbert-Schmidt operators, then 
\begin{equation*}
 \overline{\span}(H_0,\Lambda) =\big\{\sum_{\lambda \in \Lambda}c_\lambda\,  H_\lambda\,: \ \ \{c_\lambda\}\in\ell^2(\Lambda)  \big\}
\end{equation*}
and
\begin{equation}\label{eqn:HS-Riesz}
\big\|\sum_{\lambda\in\Lambda} c_\lambda
H_\lambda\big\|_{HS}\asymp\|\{c_\lambda\}\|_{\ell^2(\Lambda)},\quad \{c_\lambda\}\in\ell^2(\Lambda).
\end{equation}
In this case,  identifiability of $\overline{\span}(H_0,\Lambda)$ by $g$ is equivalent to establishing
\begin{equation} \label{eqn:equivToIdent}
\big\|\sum_{\lambda\in\Lambda} c_\lambda H_\lambda
g\big\|_{2}\asymp\|\{c_\lambda\}\|_{\ell^2(\Lambda)},\quad \{c_\lambda\}\in\ell^2(\Lambda),
\end{equation}
that is, to showing that $\{H_\lambda g\}_{\lambda\in\Lambda}$ is a Riesz sequence in $L^2(\R)$. Note that \eqref{eqn:HS-Riesz} corresponds to a Riesz sequence condition in the ``large space'' $L^2(\R^2)$, while \eqref{eqn:equivToIdent} is a Riesz sequence condition on a similarly structured family of vectors in the ``smaller space'' $L^2(\R)$.  We shall generally assume  that the weaker condition,  $(H_0,   \Lambda)$ is  a Riesz  sequence, holds and  focus on the question whether the set $\{H_\lambda g\}_{\lambda\in \Lambda}$ is a Riesz sequence for some $g$. 
%
\begin{remark}
 \rm ({\it i})\  We established  that  \eqref{eqn:HS-Riesz} and \eqref{eqn:equivToIdent} implies \eqref{eqn:identification}.  Moreover, if $g$ is square integrable, then $$\|\sum_{\lambda\in\Lambda} c_\lambda H_\lambda g\|_{L^2} \leq \|\sum_{\lambda\in\Lambda} c_\lambda H_\lambda\|_{HS} \,\|g\|_{L^2},\quad \{c_\lambda\}\in\ell^2(\Lambda),$$ and \eqref{eqn:HS-Riesz} can be replaced with the condition that $(H_0,\Lambda)$ is a Bessel sequence 
 to obtain \eqref{eqn:identification}. 
 This argument is not always applicable as we shall generally allow $g$ to be a tempered distribution  and use the fact that  some spaces of operators map spaces of distributions into $L^2(\R)$. 
For example, operators in so-called operator Paley-Wiener space
\begin{equation*}
 OPW^2(S)=\{H\in HS(L^2(\R)):\ \supp \eta_H\subseteq S \}
\end{equation*}
map boundedly the modulation space $M^\infty(\R)$ defined in \eqref{eq:defnmodspaces1} to $L^2(\R)$ in case that $S$  is compact  \cite{Pfa07c}.

\noindent ({\it ii})\
The identifiability of $\overline{\span}(H_0, \Lambda)$  neither implies \eqref{eqn:HS-Riesz} nor \eqref{eqn:equivToIdent}. Indeed, in some cases  $\overline{\span}(H_0,\Lambda)$ is identifiable, $\overline{\span}(H_0,\Lambda)=\overline{\span}(H_0, 2 \Lambda)$, and $(H_0,  2 \Lambda)$ and  $\{H_\lambda g\}_{\lambda\in2 \Lambda}$  are Riesz sequences while $(H_0,   \Lambda)$ and  $\{H_\lambda g\}_{\lambda\in \Lambda}$ are not.

\end{remark}
%

The framework developed here is motivated in part by the following  well known results from time-frequency analysis.  The first result gives a necessary condition on a Gabor system to form a Riesz sequence (see \cite{Hei07} and references therein).

\begin{theorem}\label{thm:identification1stExample} Let $\Gamma = 
\big(\begin{smallmatrix}
 a_1 & a_2 \\ b_1 & b_2
\end{smallmatrix}\big)\Z^2
$ be a full rank lattice in $\R^2$. If there exists $g\in L^2(\R)$ such that
 $(g,\Gamma)=\{\pi_1(\gamma) g\}_{\gamma\in\Gamma}$ is a Riesz sequence in $L^2(\R)$, then the Beurling density $D(\Gamma) = \big|\det \big(\begin{smallmatrix}
 a_1 & a_2 \\ b_1 & b_2
\end{smallmatrix}\big)\big|^{-1}$ of $\Gamma$ satisfies $D(\Gamma)\leq 1$.
\end{theorem}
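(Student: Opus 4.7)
The plan is to convert the Riesz-sequence assertion into a statement about Gabor frames via duality, and then invoke the classical density bound for Gabor frames. To $\Gamma = M\Z^2$ I would associate its \emph{adjoint} (or commutant) lattice $\Gamma^\circ$, defined as the set of points $\lambda\in\R^2$ for which $\pi_1(\lambda)$ commutes with every $\pi_1(\gamma)$, $\gamma \in \Gamma$. The commutation relation for time-frequency shifts shows that $\Gamma^\circ$ consists of exactly those $\lambda$ whose symplectic product with each $\gamma\in\Gamma$ is an integer, so $\Gamma^\circ = J(M^T)^{-1}\Z^2$ with $J$ the standard symplectic matrix. A short determinant computation then yields $D(\Gamma^\circ) = |\det M| = 1/D(\Gamma)$.

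The central step is the \emph{Ron--Shen/Janssen duality principle}: $(g,\Gamma)$ is a Gabor Riesz sequence in $L^2(\R)$ if and only if $(g,\Gamma^\circ)$ is a Gabor frame for $L^2(\R)$. Combined with the classical density theorem for Gabor frames over a full-rank lattice (which is exactly the kind of result surveyed in \cite{Hei07}, and which says that any such frame satisfies $D(\Gamma^\circ)\ge 1$), this immediately gives $1\le D(\Gamma^\circ) = 1/D(\Gamma)$, as desired.

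The main obstacle I foresee is the invocation of duality for a general, possibly non-separable, lattice, since the textbook statements are usually formulated for separable lattices $\alpha\Z\times\beta\Z$. I would handle this by reducing to the separable case: every full-rank $\Gamma = M\Z^2\subset\R^2$ has the form $S^{-1}(\alpha\Z\times\beta\Z)$ for some symplectic matrix $S$ and positive $\alpha,\beta$ with $\alpha\beta = |\det M|$. The associated metaplectic operator $\mu(S)$ intertwines $\pi_1(\gamma)$ with $\pi_1(S\gamma)$ up to unimodular phase factors, so $(g,\Gamma)$ is a Riesz sequence iff $(\mu(S)g,\alpha\Z\times\beta\Z)$ is. Since symplectic maps preserve Lebesgue measure, $D(\Gamma)=1/(\alpha\beta)$, and everything reduces to the well-known inequality $\alpha\beta\ge 1$ for Gabor Riesz sequences over separable lattices, obtainable from Ron--Shen duality or directly via the Zak transform.
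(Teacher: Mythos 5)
Your argument is correct, but note that the paper itself does not prove this statement at all: Theorem~\ref{thm:identification1stExample} is quoted as a classical fact, with a pointer to Heil's survey \cite{Hei07} ``and references therein,'' and the paper's actual work begins only afterwards, when it recasts the theorem as an instance of its operator-identification framework via the lattice $\Lambda_\Gamma$ and Lemma~\ref{lem:con}. So there is no in-paper proof to compare against; what you have written is one of the standard proofs that the cited survey covers. Your route --- pass to the adjoint lattice $\Gamma^\circ = J(M^T)^{-1}\Z^2$, use Ron--Shen/Janssen duality to convert the Riesz-sequence hypothesis into the statement that $(g,\Gamma^\circ)$ is a frame for all of $L^2(\R)$, and then invoke the density theorem $D(\Gamma^\circ)\ge 1$ for lattice Gabor frames together with $D(\Gamma^\circ)=1/D(\Gamma)$ --- is sound, and your reduction of the non-separable case to $\alpha\Z\times\beta\Z$ by a symplectic matrix $S=DM^{-1}$ (after normalizing $\det M>0$ by reordering the columns of $M$, which leaves $M\Z^2$ unchanged) together with the metaplectic intertwining is the standard way to justify duality beyond separable lattices. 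The only caveat worth flagging is that duality merely trades one half of the density theorem for the other: the frame-necessity bound $D\ge 1$ that you end up invoking carries essentially the same analytic content as the Riesz-sequence bound you are proving, so your argument is a legitimate reduction to a known theorem rather than a self-contained proof --- which is entirely appropriate here, since the paper treats the whole statement as known.
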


Theorem~\ref{thm:identification1stExample} is a special case of the herein established framework of  operator identification.  Indeed, set 
$$\Lambda_\Gamma=
 \begin{pmatrix}
	1&0& 0& 0\\
	0&1&1&0
 \end{pmatrix}^T	\Gamma = 
  \begin{pmatrix}
	a_1&b_1& b_1 & 0\\
	a_2&b_2&b_2 &0
 \end{pmatrix}^T	\Z^2
$$
and observe that Lemma~\ref{lem:con} implies that for any Hilbert-Schmidt operator $H_0$ we have
\begin{equation*}
  \overline{\span}(H_0,\Lambda_\Gamma) = \overline{ \span} \{\pi_1(\gamma)\circ H_0\}_{\gamma\in\Gamma} .
\end{equation*}
So $g$ identifies  $ \overline{\span}(H_0,\Lambda_\Gamma) $   if and only if $\{ \pi_1(\gamma) (H_0 g) \}_{\gamma \in \Gamma}$ is a Riesz sequence. It follows that $D(\Gamma)\leq 1$ is necessary for the identifiability of $ \overline{\span}(H_0,\Lambda_\Gamma) $.\footnote{Our reasoning uses $Hg\in L^2(\R)$ and not $g\in L^2(\R)$. Indeed, the flexibility of choosing not square integrable $g$ is quite beneficial. In fact, for some $(H_0,\Lambda)$ we must choose $g\in M^\infty(\R)\setminus L^2(\R)$ to achieve that $\{H_\lambda g\}_{\lambda \in \Lambda}$ is a Riesz sequence in $L^2(\R)$.} The condition $D(\Gamma)\leq 1$ implies that $\Lambda_\Gamma$ is not too dense in the two-dimensional ``tilted'' plane $\R\times\{y=z\}\times\{0\}\subseteq\R^4$. 

%

The second result motivating this paper plays a critical role in the analysis of slowly time-varying operators \cite{Kai59,Bel69} and in the recently developed sampling theory for operators \cite{KP06,PW06,Pfa11}.

\begin{theorem}\label{thm:identification2ndExample}
Let $\mathcal M = \big(\begin{smallmatrix}
 a_1 & a_2 \\ b_1 & b_2
\end{smallmatrix}\big)\Z^2 \subseteq \R^2$ be a full-rank lattice and $H_0:L^2(\R)\longrightarrow L^2(\R)$, $H_0f=\rho\cdot (f\ast r)$, be a product-convolution Hilbert-Schmidt operator with $\rho,\widehat r$ smooth and compactly supported. If there exists a tempered distribution $g$ such that
 $\{\pi_1(\gamma)H_0\pi_1(\gamma)^\ast g\}_{\gamma\in\mathcal M}$ is a Riesz sequence in $L^2(\R)$, then $D(\mathcal M)\leq 1$.
\end{theorem}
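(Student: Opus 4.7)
The plan is to recognize that the family $\{\pi_1(\gamma)H_0\pi_1(\gamma)^*\}_{\gamma\in\mathcal M}$ fits into the paper's framework $(H_0,\Lambda)$ for a specific rank-$2$ lattice $\Lambda_{\mathcal M}\subset\R^4$ and then invoke Theorem~\ref{thm:mainresult}, exactly in the spirit of how Theorem~\ref{thm:identification1stExample} is derived above.

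First, I would compute the spreading function of the conjugated operator. Using the Weyl-type commutation relation
\[
\pi_1(\gamma)\,\pi_1(t,\nu)\,\pi_1(\gamma)^* \;=\; e^{2\pi i(\omega_0 t - x_0\nu)}\,\pi_1(t,\nu), \qquad \gamma=(x_0,\omega_0),
\]
and integrating against $\eta_{H_0}(t,\nu)$ in the representation $H_0=\iint\eta_{H_0}(t,\nu)\pi_1(t,\nu)\,dt\,d\nu$, we obtain
\[
\eta_{\pi_1(\gamma)H_0\pi_1(\gamma)^*}(t,\nu)\;=\;e^{2\pi i(\omega_0 t - x_0\nu)}\,\eta_{H_0}(t,\nu)\;=\;\pi_2(0,0,\omega_0,-x_0)\,\eta_{H_0}(t,\nu).
\]
Hence $\pi_1(\gamma)H_0\pi_1(\gamma)^*=H_{\lambda(\gamma)}$ with $\lambda(\gamma)=(0,0,\omega_0,-x_0)\in\R^4$.

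Next, I identify the lattice. Writing $\mathcal M=\bigl(\begin{smallmatrix}a_1&a_2\\b_1&b_2\end{smallmatrix}\bigr)\Z^2$, the set $\Lambda_{\mathcal M}=\{(0,0,\omega_0,-x_0):(x_0,\omega_0)\in\mathcal M\}$ is a rank-$2$ lattice lying in the pure-modulation subspace $V=\{0\}\times\{0\}\times\R^2\subset\R^4$, generated by the columns of $\bigl(\begin{smallmatrix}0&0\\0&0\\b_1&b_2\\-a_1&-a_2\end{smallmatrix}\bigr)$. Its two-dimensional density in $V$ (with the induced Lebesgue measure) equals $|a_1b_2-a_2b_1|^{-1}=D(\mathcal M)$. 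Under this identification, the hypothesis that $\{\pi_1(\gamma)H_0\pi_1(\gamma)^*g\}_{\gamma\in\mathcal M}$ is a Riesz sequence in $L^2(\R)$ is exactly condition \eqref{eqn:equivToIdent} for $(H_0,\Lambda_{\mathcal M})$, so $g$ identifies $\overline{\span}(H_0,\Lambda_{\mathcal M})$.

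The product-convolution structure of $H_0$ yields $\eta_{H_0}(t,\nu)=r(t)\,\widehat\rho(\nu)\,e^{2\pi i\nu t}$; since both $r$ and $\widehat\rho$ are smooth and compactly supported, $\eta_{H_0}\in\mathcal S(\R^2)$ and therefore lies in every weighted modulation space $M^1_s(\R^2)$. This supplies whatever regularity on the prototype operator is required by Theorem~\ref{thm:mainresult}, so that the theorem applies to $(H_0,\Lambda_{\mathcal M})$ and produces a density bound on $\Lambda_{\mathcal M}$. Translating this bound through the computation above gives $D(\mathcal M)\le 1$.

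The main obstacle is cleanly matching the density quantity appearing in Theorem~\ref{thm:mainresult} with the classical Beurling density $D(\mathcal M)$: one has to argue that the relevant notion of density for a rank-$2$ sublattice of $\R^4$ lying in the two-dimensional modulation plane $V$ coincides with the inverse of the area of a fundamental parallelogram of the full-rank lattice $\mathcal M\subset\R^2$, and that no projection-dependent loss of a factor is incurred. Once this bookkeeping is done, the conclusion $D(\mathcal M)\le 1$ follows immediately from the density criterion.
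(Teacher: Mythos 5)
Your reduction to the paper's $\R^4$ framework is correct and is exactly the route the paper intends: the commutation relation gives $\eta_{\pi_1(\gamma)H_0\pi_1(\gamma)^*}=M_{(\omega_0,-x_0)}\eta_{H_0}$, so the conjugated family is $(H_0,\Lambda_{\mathcal M})$ for a rank-2 lattice supported in the ``pure modulation'' plane $\{0\}\times\{0\}\times\R^2$, with $D_{(2)}(\Lambda_{\mathcal M})=|\det\mathcal M|^{-1}=D(\mathcal M)$ (your version differs from the paper's displayed $\Lambda_{\mathcal M}$ only by a measure-preserving rotation $(x_0,\omega_0)\mapsto(\omega_0,-x_0)$ and a phase, which is harmless and in fact more carefully computed than the paper's sketch). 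The verification $\eta_{H_0}(t,\nu)=r(t)\widehat\rho(\nu)e^{2\pi i\nu t}\in\mathcal S(\R^2)\subset M^1_s(\R^2)$ is also fine.

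The gap is precisely the point you flag and then wave away at the end. Theorem~\ref{thm:mainresult}, applied as a black box, yields only $D_{(2)}(\Lambda_{\mathcal M})\leq\sqrt{2}$, i.e.\ $D(\mathcal M)\leq\sqrt 2$, not $D(\mathcal M)\leq 1$. The loss of $\sqrt2$ is not ``bookkeeping'': it enters in the last step of the paper's proof, where the necessary condition produced by the Gabor-molecule density theorem is $D(\tilde\Lambda)\leq 1$ for the two-dimensional lattice $\tilde\Lambda=\bigl(\begin{smallmatrix}a_1-d_1&a_2-d_2\\ c_1&c_2\end{smallmatrix}\bigr)\Z^2$, and relating $|\det\tilde\Lambda|=|(a_1-d_1)c_2-(a_2-d_2)c_1|$ to the six-minor quantity $D_{(2)}(\Lambda)^{-1}$ in \eqref{eq:2density} via the triangle and Cauchy--Schwarz inequalities costs a factor $\sqrt2$ in general. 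To get the constant $1$ you must open up that proof: for your $\Lambda_{\mathcal M}$ the time-shift components $(a_i,b_i)$ vanish, so $\{H_\lambda g\}$ is a family of Gabor molecules localized exactly on a lattice $\tilde\Lambda$ with $|\det\tilde\Lambda|=|\det\mathcal M|$, and then \cite[Theorem 3]{BCHL06a}/\cite[Theorem 8]{BCHL06b} give $D(\mathcal M)=D(\tilde\Lambda)\leq 1$ with no loss. This is exactly the improvement the paper records in the remark following Theorem~\ref{thm:mainresult} (``if the first or the last row of $M$ is zero \dots identifiability then implies $D_{(2)}(\Lambda)\leq 1$''), and your proof is incomplete without it. A secondary, smaller mismatch: the statement allows any tempered distribution $g$, while Theorem~\ref{thm:mainresult} and Lemma~\ref{lm:estimate} are formulated for $g\in M^\infty(\R)$; you should either restrict to $g\in M^\infty(\R)$ or note that the decay estimates extend to $g\in M^\infty_{-s'}$ for the relevant weights, since $\eta_{H_0}\in\mathcal S(\R^2)$.
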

%

%

Similarly to above, Lemma~\ref{lem:con} implies that setting 
\begin{eqnarray*}
\Lambda_{\mathcal M}=\begin{pmatrix} 0&0& 1& 0\\
0&0&0&1 
\end{pmatrix}^T\mathcal M = \begin{pmatrix} 0&0& a_1& b_1\\
0&0&a_2&b_1 
\end{pmatrix}^T \Z^{2}
\end{eqnarray*}
indicates that Theorem~\ref{thm:identification2ndExample} may be considered a special case of  our framework.

 Theorems \ref{thm:identification1stExample} and  \ref{thm:identification2ndExample} motivate the central question in this paper which we paraphrase as follows.
\begin{question} \rm
Can we define a density $\widetilde{D}$ on lattices $\Lambda=M\Z^2\subseteq \R^4$ 
so that for a positive constant $C$ we have   $\widetilde{D}(\Lambda)> C$ implies $ \overline{\span}(H_0,\Lambda)$ is not identifiable whenever $(H_0,\Lambda)$ is a Riesz sequence?
%
\end{question}
%

We choose the following ``Beurling-type'' density for sets of points $\Lambda$ lying within general two-dimensional subspaces of $\R^{4}$.

\begin{definition}\label{def:beruling2d}
 The
``two-dimensional'' upper and lower Beurling densities $($or for short
2-Beurling density$)$ of $\Lambda\subseteq \R^4$ are given by
\begin{equation}\label{eqdef:beruling2d}
\begin{aligned}
D_{(2)}^-(\Lambda)&=\sup_{U\in\mathcal U}\ \liminf_{R\to\infty}\ \inf_{z\in
U}\frac{|\Lambda\cap \{\mathcal B_{4}(R)+z\}|}{\pi R^{2}}, \\
D_{(2)}^+(\Lambda)&=\sup_{U\in\mathcal U}\ \limsup_{R\to\infty}\ \sup_{z\in
U}\frac{|\Lambda\cap \{\mathcal B_{4}(R)+z\}|}{\pi R^{2}},
\end{aligned}
\end{equation}
where $\mathcal U$ denotes the set of two-dimensional affine subspaces of $\R^4$ and $\mathcal B_4(R)$ is the centered open ball with radius $R$ in $\R^4$.

If $D_{(2)}^+(\Lambda)= D_{(2)}^-(\Lambda)$, then $\Lambda$ has uniform 2-Beurling density $D_{(2)}(\Lambda)=D_{(2)}^-(\Lambda)$.
\end{definition}

Observe that with
\begin{equation*}
 \Lambda=\begin{pmatrix} a_1&b_1& c_1& d_1\\
a_2&b_2& c_2& d_2
\end{pmatrix}^T\Z^2
\end{equation*}
we have
\begin{equation}\label{eq:2density}
\begin{aligned}D_{(2)}(\Lambda)=&\big[(a_1b_2-a_2b_1)^2+(a_1c_2-a_2c_1)^2\\ &\quad +(a_1d_2-a_2d_1)^2+(b_1c_2-b_2c_1)^2\\
&\quad +(b_1d_2-b_2d_1)^2+(c_1d_2-c_2d_1)^2\big]^{-1/2}.
\end{aligned}
\end{equation}
Hence, for
 $
\Lambda_{\mathcal M}=\big(\begin{smallmatrix} 0&0& a_1& b_1\\
0&0&a_2&b_2
\end{smallmatrix}\big)^T\Z^{2}
$ in Theorem~\ref{thm:identification2ndExample}
we have $D_{(2)}(\Lambda_{\mathcal M})=|a_1b_2-a_2 b_1|^{-1}=\big|\det \big( 
\begin{smallmatrix}
 a_1 & a_2\\
 b_1 & b_2
\end{smallmatrix}\big)\big|^{-1}$, and for $\displaystyle
\Lambda_{\Gamma}=\big(\begin{smallmatrix}  a_1&b_1& b_1& 0\\
a_2&b_2&b_2&0
\end{smallmatrix}\big)^T\Z^{2}
$
in Theorem~\ref{thm:identification1stExample}
we have $D_{(2)}(\Lambda_{\Gamma})={ 2^{-1/2}}\,\big|\det \big( 
\begin{smallmatrix}
 a_1 & a_2\\
 b_1 & b_2
\end{smallmatrix}\big)\big|^{-1}$.

The main result in this paper provides a necessary condition on $\Lambda$ for the existence of $g$ so that $\{ H_\lambda g\}_{\lambda \in\Lambda}$ is a Riesz sequence.  $M_s^1$ and $M^\infty$ denote modulation spaces whose definitions we recalled in Section~\ref{sect:preliminaries}.
\begin{theorem}\label{thm:mainresult}
Let $\Lambda=M\Z^2\subseteq \R^4$ and $H_0$ be an operator with $\eta_{H_0}\in M_s^1(\R)$, $s>2$. If $(H_0,\Lambda)$ is Riesz and 
$ \overline{\span}(H_0,\Lambda)$ is identifiable  by $g\in M^\infty(\R)$  then $D_{(2)}(\Lambda)\leq \sqrt{2}$.
%
\end{theorem}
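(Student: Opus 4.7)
The plan is to factor each $H_\lambda$ using two-sided conjugation by time-frequency shifts, which rewrites $H_\lambda g$ as an honest $\pi_1$-shift of a $\lambda$-dependent vector $f_\lambda$ in $L^2(\R)$, and then to extract the $\sqrt{2}$ bound by combining a density estimate on a projection of $\Lambda$ to $\R^2$ with a Cauchy--Binet inequality.

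A direct computation with the composition rule for spreading functions (the same identity that underlies Lemma~\ref{lem:con} used in the discussion of Theorems~\ref{thm:identification1stExample}--\ref{thm:identification2ndExample}) yields, for every $\lambda=(s,\omega;z,y)\in\R^4$, the unique factorization
\[
 H_\lambda \;=\; e^{-2\pi i zy}\,\pi_1(s-y,z)\,H_0\,\pi_1(y,\omega-z).
\]
Setting $P(s,\omega;z,y):=(s-y,z)\in\R^2$ and $f_\lambda:=H_0\,\pi_1(y,\omega-z)g\in L^2(\R)$, this becomes $H_\lambda g = e^{-2\pi i zy}\,\pi_1(P\lambda)\,f_\lambda$. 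Applying the standard intertwining identities for $V_\gamma$ then gives
\[
 V_\gamma(H_\lambda g)(x,\xi) \;=\; e^{-2\pi i zy}\,\pi_2\bigl(s-y,z;\,z,0\bigr)\,V_\gamma f_\lambda(x,\xi),
\]
so identifiability by $g$ is equivalent to the ``variable-window'' Gabor-type Riesz condition on $\{\pi_1(P\lambda)f_\lambda\}_{\lambda\in\Lambda}$ in $L^2(\R)$, with windows $f_\lambda$ depending on $\lambda$ only through the remaining projection $(y,\omega-z)$.

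I would then apply a Ramanathan--Steger / Homogeneous Approximation Property comparison of this family against a reference Gabor Riesz basis $(\varphi,\Z^2)$ of $L^2(\R)$ at critical density $1$. The hypotheses $\eta_{H_0}\in M_s^1$ with $s>2$ and $g\in M^\infty(\R)$ are exactly those which, via the Feichtinger--Gr\"ochenig mapping theorems, force $\{f_\lambda\}$ to be uniformly bounded in $M^\infty(\R)$ and $V_\gamma f_\lambda$ to be uniformly in $\lambda$ time-frequency concentrated near $(y,\omega-z)$. The resulting uniform localization lets the usual box-counting argument proceed: for each $R>0$, the number of $\lambda\in\Lambda$ with $P\lambda$ in a ball of radius $R$ is bounded, with error of order $o(R^2)$, by the number of reference atoms whose localization meets a slightly enlarged ball. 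Passing to $R\to\infty$ yields
\[
 D^+(P\Lambda)\;\leq\;1.
\]

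The final step is algebraic. Writing $\Lambda=M\Z^2$ for a $4\times 2$ real matrix $M$ of rank two and $M_{ij}$ for the $2\times 2$ submatrix consisting of rows $i,j$, a direct expansion gives $\det(PM)=\det M_{13}+\det M_{34}$. Combining the elementary inequality $(a+b)^2\leq 2(a^2+b^2)$ with the Cauchy--Binet formula $\det(M^TM)=\sum_{1\leq i<j\leq 4}(\det M_{ij})^2$ produces $(\det PM)^2\leq 2\det(M^TM)$, so
\[
 D_{(2)}(\Lambda)=(\det M^TM)^{-1/2}\leq\sqrt{2}\,|\det PM|^{-1}=\sqrt{2}\,D^+(P\Lambda)\leq\sqrt{2},
\]
which is the claimed bound. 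The main obstacle is the variable-window density step: the classical Ramanathan--Steger density theorem is designed for a single, fixed window, and adapting the HAP estimates to the $\lambda$-dependent family $\{f_\lambda\}$ is where the strong modulation-space hypothesis $\eta_{H_0}\in M_s^1$ with $s>2$ is used essentially to produce uniform localization. A secondary technical point is the degenerate configuration $\det PM=0$, in which the single projection $P$ does not control $D_{(2)}(\Lambda)$ directly and a complementary, Fourier-conjugated projection must be brought in to close the argument.
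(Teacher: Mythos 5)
Your proposal follows essentially the same route as the paper: the conjugation factorization $H_\lambda=e^{-2\pi i zy}\,\pi_1(s-y,z)\,H_0\,\pi_1(y,\omega-z)$, the reduction to a uniformly localized (Gabor-molecule) family over the projected lattice $P\Lambda$, the density bound $D^+(P\Lambda)\le 1$ (which the paper obtains by verifying the Gabor-molecule condition via its Lemma~\ref{lm:estimate} and citing the Balan--Casazza--Heil--Landau density theorems rather than redoing the Ramanathan--Steger/HAP comparison by hand), and the Cauchy--Binet estimate $(\det PM)^2\le 2\det(M^TM)$, which makes explicit the ``computation'' the paper leaves to the reader. Your worry about the degenerate case $\det PM=0$ requires no complementary projection: counting with multiplicity (the index set is $\Z^2$), $D^+(P\Lambda)=\infty$ there, so the same density theorem already excludes the Riesz property and the implication holds vacuously.
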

{ If the first or the last row of $M$ is zero, then the bound on $D_{(2)}(\Lambda)$ can be improved. Identifiability then implies $D_{(2)}(\Lambda)\leq 1$. This is the case in Theorem \ref{thm:identification2ndExample} and Theorem \ref{thm:identification1stExample} respectively (in Theorem~\ref{thm:identification1stExample} even $D_{(2)}(\Lambda)\leq 2^{-1/2}$). 
It is not clear whether the constant $\sqrt 2$ in Theorem~\ref{thm:mainresult} can be improved in the general case.

Identifiability of $\overline{\span} (H_0,\Lambda)$ depends on both, $H_0$ and the lattice $\Lambda$, so it is not surprising that there exists no $c>0$ with
$
 D_{(2)}^+(\Lambda)< c$ implies $ \overline{\span}(H_0,\Lambda)$ is identifiable.
In fact, it is easy to construct for  any $\epsilon >0$ a Riesz sequence $(H_0,\Lambda)$ with $D_{(2)}(\Lambda)\leq \epsilon$ but  $ \overline{\span}(H_0,\Lambda)$ is not identifiable (see Example~\ref{prop:casenotidef} and \cite{GPR10}). 

We would like to emphasize    that the  operator outputs considered herein are in the ``small'' space $L^2(\R)$ while the kernels and spreading functions of the operators are  in the ``larger'' space of $L^2(\R^2)$ functions. This dimension mismatch 
implies that a single evaluation map $H\mapsto Hg$ cannot identify the space of  Hilbert-Schmidt operators as a whole, just as a degree of freedom counting argument shows that the space of complex $n\times n$ matrices requires the use  of $n$ identifiers  for identification. 


%

\section{Proof of Theorem~\ref{thm:mainresult} }\label{sect:proof}

The following lemmas are used in the proof of Theorem \ref{thm:mainresult}.
\begin{lemma}\label{lem:con}
{ Let $\eta_0$ denote} the spreading function of $H_0\in HS(\R)$. Then the operator $T_{a}M_{b}T_{-c}H_0T_{c}M_{d}$
   has spreading function
 \begin{equation*}
 \eta_{T_{a}M_{b}T_{-c}H_0T_{c}M_{d}}=T_{a,b+d}M_{b,c}\eta_0,
 \quad a,b,c,d\in\R.
\end{equation*}
 Hence, 
\begin{equation*}
\eta_{H}=
\sum_{\lambda=(s ,\omega , \xi , y )\in\Lambda}c_{\lambda}\eta_{H_\lambda}=\sum_{\lambda=(s ,\omega , \xi , y )\in\Lambda}c_{\lambda}\pi_2(\lambda)\eta_0.
\end{equation*}
converges in $L^{2}$-norm if and only if
 \begin{eqnarray*}\notag
 H&=&\sum_{\lambda=(s ,\omega , \xi, y )\in\Lambda}c_{\lambda}
   T_{s }M_{ \xi }T_{- y }
   H_0
   T_{ y }M_{\omega - \xi }
 \end{eqnarray*}
converges in $HS(\R)$.
\end{lemma}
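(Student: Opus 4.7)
The plan is to reduce everything to a direct computation using the integral representation of $H_0$ via its spreading function, and then to track phase factors carefully using the composition law for one-dimensional time-frequency shifts.

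First, I would record the two elementary commutation/composition identities for $\pi_1$ that are derived straight from $T_t M_\nu f(x) = e^{2\pi i \nu(x-t)}f(x-t)$, namely
\begin{equation*}
T_a M_b\,T_t M_\nu = e^{2\pi i b t}\,T_{a+t}M_{b+\nu},\qquad T_t M_\nu\,T_c M_d = e^{2\pi i \nu c}\,T_{t+c}M_{\nu+d},
\end{equation*}
together with the auxiliary identity $M_b T_t = e^{2\pi i b t}T_t M_b$. These are one-line calculations and will carry all of the phase bookkeeping.

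Next, I would substitute $H_0 f = \iint \eta_0(t,\nu)\,T_tM_\nu f\,d\nu\,dt$ into the left side and push the outer time-frequency operators inside the integral:
\begin{equation*}
T_a M_b T_{-c} H_0 T_c M_d f = \iint \eta_0(t,\nu)\,\bigl(T_aM_bT_{-c}\,T_tM_\nu\,T_cM_d\bigr)f\,d\nu\,dt.
\end{equation*}
Applying the three identities above to collapse the five-fold composition $T_aM_bT_{-c}T_tM_\nu T_cM_d$ into a single time-frequency shift times a scalar phase yields an expression of the form $e^{2\pi i\,\Phi(a,b,c,d,t,\nu)}\,T_{a+t}M_{b+\nu+d}$, where $\Phi$ collects the phases $\nu c$, $b t$, and the contribution of the two translations that combine to $T_{a+t}$ (the $T_{-c}$ and $T_c$ cancel cleanly by the first identity). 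A change of variables $t'=a+t$, $\nu'=b+d+\nu$ then puts the integral in the canonical form $\iint \eta(t',\nu')\,T_{t'}M_{\nu'}f\,d\nu'\,dt'$ and lets me read off
\begin{equation*}
\eta_{T_aM_bT_{-c}H_0T_cM_d}(t',\nu') = e^{2\pi i[b(t'-a)+c(\nu'-b-d)]}\,\eta_0(t'-a,\nu'-b-d),
\end{equation*}
which is exactly $T_{(a,b+d)}M_{(b,c)}\eta_0 = \pi_2\bigl((a,b+d),(b,c)\bigr)\eta_0$ by the definition \eqref{eqn:TFshift}.

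For the ``hence'' part I would specialize to $a=s$, $b=\xi$, $c=y$, $d=\omega-\xi$ so that $(a,b+d;b,c)=(s,\omega,\xi,y)=\lambda$, which matches $\eta_{H_\lambda}=\pi_2(\lambda)\eta_0$ in \eqref{eqn:defineHlambda}. Finally, the equivalence of the two modes of convergence follows at once from the $L^2$-$HS$ isometry $\|H\|_{HS}=\|\eta_H\|_{L^2}$ recorded in Section~\ref{sect:preliminaries}: since the map $H\mapsto \eta_H$ is a unitary isomorphism $HS(\R)\to L^2(\R^2)$, a series $\sum c_\lambda H_\lambda$ is Cauchy (hence convergent) in $HS$ if and only if $\sum c_\lambda \eta_{H_\lambda}$ is Cauchy in $L^2(\R^2)$. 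The only potential obstacle is sign/phase bookkeeping in the five-fold composition — conceptually routine, but error-prone — so the bulk of the writeup is simply verifying that every phase factor lands in the right place to reproduce $M_{(b,c)}$ and not, say, $M_{(b,d)}$.
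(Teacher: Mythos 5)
Your proposal is correct and follows essentially the same route as the paper: a direct computation with the integral representation $H_0f=\iint\eta_0(t,\nu)T_tM_\nu f\,d\nu\,dt$, the commutation relation for time-frequency shifts, and a change of variables to read off the spreading function (the paper merely runs the same change of variables in the reverse direction, starting from $T_{(a,b+d)}M_{(b,c)}\eta_0$), with the convergence equivalence following from the isometry $\|H\|_{HS}=\|\eta_H\|_{L^2}$. Your phase bookkeeping checks out: the five-fold composition collapses to $e^{2\pi i(bt+c\nu)}T_{a+t}M_{b+d+\nu}$, giving exactly $T_{(a,b+d)}M_{(b,c)}\eta_0$, and the specialization $a=s$, $b=\xi$, $c=y$, $d=\omega-\xi$ matches the lemma.
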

\begin{proof}
A change of variables $t=t-a$, $\nu=\nu-b-d$ and 
  the relation
  $T_{x}M_{\omega}=e^{-2\pi i x\omega}M_{\omega}T_{x}$ implies 
  \begin{equation*}
    \begin{aligned}
      \iint &
            T_{a,b+d}M_{b,c}\eta_0
            (t,\nu)f({x-t})e^{2\pi i\nu(x-t)} \,d \nu\,dt=\\
        &=
            \iint 
            e^{2\pi i(b (t-a) + c(\nu-b-d))}
            \eta_0(t-a,\nu-b-d)
            f({x-t})e^{2\pi i\nu(x-t)} \,d \nu\,dt\\
        &=
            \iint 
            e^{2\pi i(b t + c\nu)}
            \eta_0(t,\nu)
            f({x-t-a})
            e^{2\pi i(\nu+b+d)(x-t-a)} \,d \nu\,dt\\
        &=
            \iint 
            e^{2\pi i(b t + c\nu)}
            \eta_0(t,\nu)
            T_{t+a}M_{\nu+b+d}f(x)\,d \nu\,dt\\
        &=  T_{a}M_{b}T_{-c}
            \iint 
            \eta_0(t,\nu)
            T_{t}M_{\nu}T_{c}M_{d}f(x)\,d \nu\,dt\\
        &=
            T_{a}M_{b}T_{-c}
            H_0
            T_{c}M_{d}f(x).
    \end{aligned}
  \end{equation*}
  Hence, in particular,
  \begin{equation*}
    \begin{aligned}
      (H f)(x)
        =&\iint 
            \eta_{H}(t,\nu)f(x-t)e^{2\pi i\nu(x-t)} \,d \nu\,dt\\
        =&\sum_{\lambda=(s ,\omega ,\xi , y )\in\Lambda}c_{\lambda}
            \iint             T_{s,\omega}M_{\xi,y}
            \eta_0(t,\nu)f(x-t)e^{2\pi i\nu(x-t)} \,d \nu\,dt\\
        =&\sum_{\lambda=(s ,\omega ,\xi , y )\in\Lambda}c_{\lambda}
            T_{s}M_{\xi}T_{-y}
            H_0
            T_{y}M_{\omega-\xi}f(x).
    \end{aligned}
  \end{equation*}
\end{proof}
\begin{lemma}\label{lemma:lemma1}
Let $p,q\in C_c^\infty(\R^d)$ and let $P$ be the product-convolution operator with
spreading function $\eta_P=p\otimes q$. Then there exist functions
$\psi_1,\psi_2\in\calS(\R^d)$ such that   
$$
|Pf(x)|\le \|f\|_{M^\infty(\R^d)}|\psi_1(x)|,\ \  
|\calF Pf(\omega)|\le \|f\|_{M^\infty(\R^d)}|\psi_2(\omega)|,\ \ f\in M^\infty(\R^d).$$
\end{lemma}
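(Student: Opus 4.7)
The plan is to factorize $P$ as a composition of a multiplication operator and a convolution operator and to exploit this structure separately on the time side and the frequency side. Setting $h := \mathcal{F}^{-1}q$, which lies in $\mathcal{S}(\R^d)$ because $q \in C_c^\infty(\R^d)$, a direct computation shows $P = C_p \circ M_h$, where $M_h$ is multiplication by $h$ and $C_p$ is convolution with $p$. Equivalently,
\[
Pf(x) = \int p(x-u)\,h(u)\,f(u)\,du, \qquad \widehat{Pf}(\omega) = \hat p(\omega)\,(q * \hat f)(\omega).
\]

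For the frequency-side bound I would use that $q \in C_c^\infty \subset M^1(\R^d)$ and that the $M^1$-norm is translation invariant. Writing $(q * \hat f)(\omega) = \langle \hat f, \overline{q(\omega-\cdot)}\rangle$ and applying the duality $M^\infty = (M^1)^*$ together with the fact that $\mathcal{F}$ is a bounded isomorphism of $M^\infty$, I get
\[
|(q * \hat f)(\omega)| \le \|\hat f\|_{M^\infty}\,\|q\|_{M^1} \le C\|f\|_{M^\infty}.
\]
Combining with the Schwartz regularity of $\hat p$ yields $|\widehat{Pf}(\omega)| \le C\|f\|_{M^\infty}\,|\hat p(\omega)|$, so that $\psi_2$ can be chosen as any Schwartz majorant of $C|\hat p|$.

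For the time-side bound I would view $Pf(x) = \langle f, \phi_x\rangle$ with $\phi_x(u) := \overline{h(u)\,p(x-u)}$, giving $|Pf(x)| \le C\|f\|_{M^\infty}\|\phi_x\|_{M^1}$ by the same duality. The core task reduces to showing that $\|\phi_x\|_{M^1}$ decays rapidly in $|x|$. Computing $V_\gamma \phi_x(y,\eta)$ directly, three effects combine to produce this decay: the support of $\phi_x$ is contained in a ball of fixed radius $R$ around $x$ (because $\supp p$ is compact), so on this support the Schwartz decay of $h$ gives $|h^{(\alpha)}(u)| \le C_{N,\alpha}(1+|x|)^{-N}$ for all $N$ and $\alpha$; repeated integration by parts in $u$ yields decay $(1+|\eta|)^{-k}$ for any $k$; and the Gaussian factor $\gamma(u-y)$ gives Schwartz-type decay in $|x-y|$. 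Integrating over $y$ and $\eta$ I obtain $\|\phi_x\|_{M^1} \le C_N(1+|x|)^{-N}$ for every $N$.

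The remaining step, upgrading rapid polynomial decay to a genuine Schwartz majorant, is routine: for any continuous $G \ge 0$ with $G(x) \le C_N(1+|x|)^{-N}$ for all $N$, pick a non-negative bump $\eta \in C_c^\infty(\R^d)$ with $\eta \ge 1$ on $[-\tfrac12,\tfrac12]^d$ and $\supp \eta \subset [-1,1]^d$, and set $A_k := \sup\{G(y) : |y-k|_\infty \le 1\}$ for $k \in \Z^d$. Then $\psi(x) := \sum_{k \in \Z^d} A_k\,\eta(x-k)$ is a locally finite sum with rapidly decaying coefficients, hence $\psi \in \mathcal{S}(\R^d)$, and $\psi \ge G$ pointwise (since for any $x$ the closest $k_0 \in \Z^d$ satisfies $\eta(x-k_0) \ge 1$ and $A_{k_0} \ge G(x)$). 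Applying this once with $G = C|\hat p|$ and once with $G(x) = C\|\phi_x\|_{M^1}$ yields $\psi_2$ and $\psi_1$, respectively. I expect the main obstacle to be the STFT bookkeeping for $\|\phi_x\|_{M^1}$, where the support constraint coming from $p$, the Schwartz decay of $h$ and $\gamma$, and the oscillation in $\eta$ all need to be combined in a single estimate that extracts uniform rapid decay in $x$.
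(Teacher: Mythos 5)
The paper does not actually prove this lemma --- it is stated as ``a straightforward generalization of Lemma 3.4 in \cite{KP06} and its proof is omitted'' --- and your argument is correct and is essentially that standard proof: factoring $P=C_p\circ M_h$ with $h=\calF^{-1}q\in\calS(\R^d)$, writing $Pf(x)=\langle f,\phi_x\rangle$ and $\calF Pf=\hat p\cdot(q\ast\hat f)$, and combining the $M^1$--$M^\infty$ duality with the rapid decay of $\|\phi_x\|_{M^1}$ forced by the compact support of $p$ and the Schwartz decay of $h$. The two steps you defer (the STFT bookkeeping yielding $\|\phi_x\|_{M^1}\le C_N(1+|x|)^{-N}$ via integration by parts in the modulation variable and Gaussian decay in the translation variable, and the upgrade of rapid decay to a Schwartz majorant) are routine and correctly outlined, so no gap remains.
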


Lemma~\ref{lemma:lemma1} is a straightforward generalization of Lemma 3.4 in \cite{KP06} and its proof is omitted.
Lemma~\ref{lemma:lemma1} {with $d=1$} is used to prove the following {result}.

\begin{lemma}\label{lm:estimate}
For $H$   with spreading function $\eta_0\in M_s^1(\R)$, $s>2$, {there} exist $\varphi_1(t) ,\varphi_2(t) =O(t^{-s})$ with 
$$
	 |Hf(x)|\le \|f\|_{M^\infty(\R)}\varphi_1(x),\ \
|\calF Hf(\omega)|\le \|f\|_{M^\infty(\R)}\varphi_2(\omega),\ \ f\in M^\infty(\R).
$$
\end{lemma}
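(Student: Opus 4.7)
The plan is to decompose $\eta_0\in M_s^1(\R^2)$ atomically into time-frequency shifts of a separable Schwartz window $\Gamma=\gamma\otimes\gamma$, recognize each shifted atom (via Lemma~\ref{lem:con}) as the spreading function of a time-frequency conjugate of a fixed product-convolution operator $H_\Gamma$, apply Lemma~\ref{lemma:lemma1} to $H_\Gamma$, and sum the resulting pointwise estimates using Peetre's inequality and the $\ell_s^1$-summability of the atomic coefficients.

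Concretely, I would invoke the atomic characterization of $M_s^1(\R^2)$ with a Gabor frame whose window is the separable Gaussian $\Gamma(t,\nu)=\gamma(t)\gamma(\nu)$, $\gamma(x)=e^{-\pi x^2}$, writing
\[
\eta_0 \;=\; \sum_{\mu\in\Lambda_0} c_\mu\,\pi_2(\mu)\Gamma,\qquad \sum_{\mu\in\Lambda_0}|c_\mu|\,v_s(\mu) \;\le\; C\|\eta_0\|_{M_s^1},
\]
with $\Lambda_0\subset\R^4$ a sufficiently dense lattice. Writing $\mu=(\mu_1,\mu_2,\mu_3,\mu_4)$ and matching against Lemma~\ref{lem:con} (take $a=\mu_1,\,b=\mu_3,\,c=\mu_4,\,d=\mu_2-\mu_3$), the operator $H_\mu$ with spreading function $\pi_2(\mu)\Gamma$ equals $T_{\mu_1}M_{\mu_3}T_{-\mu_4}H_\Gamma T_{\mu_4}M_{\mu_2-\mu_3}$, where $H_\Gamma$ is product-convolution with $\eta_{H_\Gamma}=\gamma\otimes\gamma$. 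Because the $M^\infty$-norm is invariant under translation and modulation, applying Lemma~\ref{lemma:lemma1} to $H_\Gamma$ gives
\[
|H_\mu f(x)|\le \|f\|_{M^\infty}\,|\psi_1(x-\mu_1+\mu_4)|,\qquad |\calF H_\mu f(\omega)|\le \|f\|_{M^\infty}\,|\psi_2(\omega-\mu_3)|,
\]
for some $\psi_1,\psi_2\in\mathcal S(\R)$.

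Summing these termwise estimates over $\mu\in\Lambda_0$, exploiting the Schwartz decay $|\psi_i(y)|\le C_s(1+|y|)^{-s}$, and applying Peetre's inequality $(1+|x-y|)^{-s}\le 2^s(1+|y|)^s(1+|x|)^{-s}$, I obtain
\[
|Hf(x)| \;\le\; C\|f\|_{M^\infty}(1+|x|)^{-s}\sum_{\mu\in\Lambda_0}|c_\mu|\,(1+|\mu_1-\mu_4|)^s \;\le\; C'\|\eta_0\|_{M_s^1}\,\|f\|_{M^\infty}(1+|x|)^{-s},
\]
where the final bound uses $(1+|\mu_1-\mu_4|)^s\le 2^s v_s(\mu)$ together with the $\ell_s^1$-norm estimate of the atomic coefficients. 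The same reasoning on the Fourier side yields $|\calF Hf(\omega)|\le C'\|\eta_0\|_{M_s^1}\|f\|_{M^\infty}(1+|\omega|)^{-s}$, so that $\varphi_i(t)=C'\|\eta_0\|_{M_s^1}(1+|t|)^{-s}=O(t^{-s})$ as required.

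The main technical point is arranging that every atom in the decomposition is itself a tensor product, so that Lemma~\ref{lemma:lemma1} can be applied termwise; this is exactly why the separable Gaussian window $\Gamma=\gamma\otimes\gamma$ is chosen. A secondary point is that Lemma~\ref{lemma:lemma1} is stated for $p,q\in C_c^\infty$, but the bound extends without change to $p,q\in\mathcal S(\R)$ (alternatively, one can approximate $\Gamma$ by separable $C_c^\infty$ atoms in $M_s^1$). Everything else is bookkeeping, with the coefficient summability guaranteed by the $M_s^1$-atomic bound and the pointwise decay dictated by the Schwartz class of $\psi_1,\psi_2$.
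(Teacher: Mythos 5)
Your proposal is correct and follows essentially the same route as the paper: an atomic Gabor decomposition of $\eta_0$ in $M_s^1(\R^2)$ with a separable window and $\ell_s^1$ coefficients, conversion of each atom via Lemma~\ref{lem:con} into a conjugated product--convolution operator, a termwise application of Lemma~\ref{lemma:lemma1}, and a weighted summation (your Peetre inequality is just a cleaner packaging of the paper's explicit quadratic expansion of $y^2$). The only divergence is your Gaussian atom $\gamma\otimes\gamma$ versus the paper's compactly supported $p\otimes q\in C_c^\infty\otimes C_c^\infty$, and you already flag and resolve that point --- the paper simply takes the $C_c^\infty$ option so that Lemma~\ref{lemma:lemma1} applies verbatim.
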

 The decay estimates show that $Hg\in
L^2(\R)$ \cite[(2.52)]{Fol99}.
\begin{proof}
{By choosing parameters $a,b,c,d$ with $ac<1,bd<1$ and functions $p,q\in C_c^\infty(\R)$ with $[-\frac a2,\frac a2]\subset\supp p\subset[-\frac 1{2c},\frac1{2c}],[-\frac b2,\frac b2]\subset\supp q\subset[-\frac 1{2d},\frac1{2d}]$} we obtain a Gabor frame $(\eta_P, a\Z\times b\Z\times c\Z\times
d\Z)$ for $L^2(\R^{2})$, where $\eta_P=p\otimes q$ (see \cite[pages 22-23]{PR10} or \cite{PRW12}). 
Because $\eta_P\in\calS(\R^{2})\subset
M_s^1(\R^{2})$, the Gabor system $(\eta_P, a\Z\times
b\Z\times c\Z\times d\Z)$ is a universal Banach frame according to
the definition of Gr\"ochenig for all modulation spaces
$M_s^1(\R^{2})$ \cite[Section 13.6]{Gro01}. 
{The main result from \cite{GL03} states that the canonical dual window satisfies $\tilde\eta_P\in M_s^1(\R^{2})$.
Let us denote by $\ell^1_s(\Z^4)$ the weighted mixed-norm sequence space containing all sequences $\{c_z\}_{z\in\Z^4}$ such that the norm 
\begin{equation*}
\|\{c_z\}\|_{\ell^{1}_s(\Z^4)}=\sum_{z\in\Z^4}|c_{z}\,v_s(z)|<\infty,
\end{equation*}
where $v_s$ is defined by \eqref{def:wss}.}

Then the result from \cite[Corollary 12.2.6]{Gro01} shows that the series expansion
\begin{equation}\label{eq:series-expansion-M_s}
 f=\sum_{k,l,m,n\in\Z}\langle f, T_{ak,bl}M_{cm,dn}\tilde\eta_P\rangle
 T_{ak,bl}M_{cm,dn}\eta_P,\quad f \in M_s^1(\R^2)
\end{equation}
is convergent in the $M_s^1$-norm and  
\begin{equation}\label{eq:frame-inequality-M_s}
\|f\|_{M_s^1(\R)}\asymp \|\{\langle f,
T_{ak,bl}M_{cm,dn}\tilde\eta_P\rangle\}\|_{\ell_{s}^1(\Z^4)}, \quad f\in M_s^1(\R^2), \end{equation}
Furthermore, as the coefficients have
decay stronger than $\ell^1(\Z^4)$, the convergence of the series holds in
$L^1(\R^2)$ and in $L^2(\R^2)$.

Since the operator $H$ has spreading function $\eta_H \in
M_s^1(\R^{2})$, \eqref{eq:series-expansion-M_s}
and~\eqref{eq:frame-inequality-M_s} imply 
\[\eta_H=\sum_{k,l,m,n\in\Z}c_{k,l,m,n}T_{ak,bl}M_{cm,dn}\eta_P\]
for some $\{c_{k,l,m,n}\}\in\ell_s^1(\Z^4)$. It is legitimate to use as identifier distributions $g\in
M^\infty(\R)$, because of the inclusion
\[\calS(\R)\subset M_s^1(\R)\subset M^1(\R)\subset M^\infty(\R)\subset M^{\infty}_{-s}(\R)\subset\calS'(\R).\]
This inclusion is a consequence of~\cite[Proposition 11.3.4]{Gro01} and \cite[Corollary 12.1.10]{Gro01}. The fact that the
constant weight 1 is $v_s$-moderate (\cite[Lemma
11.1.1]{Gro01}) provides the inclusion $M_s^1(\R)\subset
M^1(\R)$.

Next, we estimate the decay of $Hg$ in the time and frequency
domain. 
We shall use the fact that translation and modulation are
isometries on $M^1(\R)$ and, hence, also on $M^\infty(\R)$ and estimate
\begin{equation}\label{eq:phi-1}
\begin{aligned}
 |Hg(x)| &= |\sum_{k,l,m,n\in\Z}c_{k,l,m,n}T_{ak}M_{-cm}T_{-dn}PT_{dn}M_{bl-cm}\,g(x)| \\
 &\le \sum_{k,l,m,n\in\Z}|c_{k,l,m,n}|\cdot T_{ak-dn}|PT_{dn}M_{bl-cm}\,g(x)| \\
 &\le\|g\|_{M^\infty(\R)}
 \sum_{k,l,m,n\in\Z}|c_{k,l,m,n}|\cdot T_{ak-dn}\phi_1(x),
\end{aligned}
\end{equation}
where $\phi_1\in\calS(\R)$ is some positive function, such that $|PT_{dn}M_{bl-cm}g(x)|\le\|g\|_{M^\infty(\R)}\phi_1(x)$ after Lemma~\ref{lemma:lemma1}.
For the sake of clarity we denote the expression on the right-hand side
of~\eqref{eq:phi-1} by
\[\Phi_1(x)=\sum_{k,l,m,n\in\Z}|c_{k,l,m,n}|\,T_{ak-dn}\phi_1(x)=\sum_{k,n\in\Z}\tilde{c}_{k,n}T_{ak-dn}\phi_1(x),\]
{where $\tilde c_{k,n}=\sum_{l,m\in\Z}|c_{k,l,m,n}|$.}

We claim that {$\Phi_1(x)=O(x^{-s})$ due to
$\{|c_{k,l,m,n}|\}_{k,l,m,n\in\Z}\in\ell_s^1(\Z^{4})\subset\ell^1(\Z^4)$.} 
Let us make a change of variables $|x|^{s}=y^2,y\ge0$, which is equivalent to 
$x=y^{\frac2s}$, $x\ge0$, and $x=-y^{\frac2s}$, $x<0$. Then
\begin{equation}\label{eq:star1}
\sup_{x\ge0} |x^{s}|\cdot|
\sum_{k,n\in\Z}\tilde{c}_{k,n}T_{ak-dn}\phi_1(x)|=\sup_{y\ge0} |y^2
\sum_{k,n\in\Z}\tilde{c}_{k,n}T_{ak-dn}\phi_1(y^{\frac 2{s}})|.
\end{equation}
Since $y^{\frac 2{s}}$ is monotone on $[0,\infty)$ and due to our
choice $\phi_1\in\calS(\R)$ (that is, $\phi_1$ decays faster than the
reciprocal of any polynomial on $\R$),
$\tilde\phi(y)=\phi_1(y^{\frac 2{s}})$ also decays faster
than the reciprocal of any polynomial. 
Then we can estimate $\sup_{y\ge0} |y^2 \sum_{k,n}\tilde{c}_{k,n}T_{ak-dn}\tilde\phi(y)|$ by setting $\Psi_1(y)=y\tilde\phi(y),\Psi_2(y)=y^2\tilde\phi(y)$ and using the equality $y^2=(y-ak-dn)^2-(ak-dn)^2+2y(ak-dn)$. 
Hence, by the triangle inequality it follows for \eqref{eq:star1} that
\begin{equation}\label{eqn:eqan4n}
\begin{aligned}
\Big|y^2 \sum_{k,n\in\Z}\tilde{c}_{k,n}T_{ak-dn}\tilde\phi(y)\Big|&\le \Big|\sum_{k,n\in\Z}\tilde{c}_{k,n}T_{ak-dn}\Psi_2\Big|\\
&\quad+ \Big|\sum_{k,n\in\Z}\tilde{c}_{k,n}(ak-dn)^2T_{ak-dn}\tilde\phi\Big| \\
 &\quad + 2\Big|\sum_{k,n\in\Z}\tilde{c}_{k,n}(ak-dn)T_{ak-dn}\Psi_1\Big|.
 \end{aligned}
\end{equation}
Taking the supremum in \eqref{eqn:eqan4n} leads to
\begin{equation}\label{eqn:eqan4}
\begin{aligned}
\sup_{y\ge0} \Big|y^2 \sum_{k,n\in\Z}\tilde{c}_{k,n}T_{ak-dn}\tilde\phi(y)\Big|&\le \sup_{y\ge0}\Big|\sum_{k,n}\tilde{c}_{k,n}T_{ak-dn}\Psi_2\Big|\\
&\quad+\, \sup_{y\ge0} \Big|\sum_{k,n\in\Z}\tilde{c}_{k,n}(ak-dn)^2T_{ak-dn}\tilde\phi\Big| \\
 &\quad +\, 2\sup_{y\ge0}\Big|\sum_{k,n\in\Z}\tilde{c}_{k,n}(ak-dn)T_{ak-dn}\Psi_1\Big| .
 \end{aligned}
\end{equation}
We compute the following upper estimate of the summands on the right-hand side in~\eqref{eqn:eqan4}
\begin{equation}\label{eqn:eqan5}
\begin{aligned}
\sup_{y\ge0} \Big|y^2 \sum_{k,n\in\Z}\tilde{c}_{k,n}T_{ak-dn}\tilde\phi(y)\Big| &\le \sum_{k,n\in\Z}|\tilde{c}_{k,n}|\sup_{y\ge0} |\Psi_2(y)| \\
&\quad+ \sum_{k,n\in\Z}|\tilde{c}_{k,n}(ak-dn)^2|\sup_{y\ge0}|\tilde\phi(y)|\\
 &\quad+ 2\sum_{k,n\in\Z}|\tilde{c}_{k,n}(ak-dn)|\sup_{y\ge0}|\Psi_1(y)|.
 \end{aligned}
\end{equation}

We analyze separately the three
summands from \eqref{eqn:eqan5}. Since $\tilde\phi,\Psi_1,\Psi_2$ belong to the Schwarz
class, they are bounded and decay faster than the reciprocal of any
polynomial. 
Also {the fact that
$\{c_{k,l,m,n}\}_{k,l,m,n\in\Z}\in\ell^1_s(\Z^4)$ implies the existance of constants $C_1,C_2>0$} such that 
\begin{equation*}
\begin{aligned}
 \sum_{k,n\in\Z}|\tilde{c}_{k,n}|\cdot|ak-dn|^2&\le C_1
\sum_{k,n\in\Z}|\tilde{c}_{k,n}|(1+a|k|+d|n|)^2\\
&<\|\{c_{k,l,m,n}\}_{k,l,m,n\in\Z}\|_{\ell_s^1(\Z^4)}<\infty, \\
 \sum_{k,n\in\Z}|\tilde{c}_{k,n}|\cdot|ak-dn|&\le C_2\sum_{k,n\in\Z}|\tilde{c}_{k,n}|(1+a|k|+d|n|)\\
&<\|\{c_{k,l,m,n}\}_{k,l,m,n\in\Z}\|_{\ell_s^1(\Z^4)}<\infty.
\end{aligned}
\end{equation*}
Furthermore,
\begin{equation*}
 \sum_{k,n\in\Z}|\tilde{c}_{k,n}|\le \|\{c_{k,l,m,n}\}_{k,l,m,n\in\Z}\|_{\ell_s^1(\Z^4)}<\infty.
\end{equation*}

Thus the  expression on the right-hand side of~\eqref{eqn:eqan5} is bounded, implying the desired decay rate of $Hg$ for $x>0$. 
In a similar fashion we prove the decay for $x<0$. 
Thus $\sup_{x\in\R}|x^{s}\Phi_1(x)|<C$ and $|Hg(x)|\le\|g\|_{M^\infty(\R)}\varphi_1(x)$ has decay $O(x^{-s}),s>2$.

A similar estimate can be done for the decay of the Fourier transform of $Hg$
\begin{equation}\label{eq:phi-2}
\begin{aligned}
|\calF{Hg}(\omega)| &= |\sum_{k,l,m,n\in\Z}c_{k,l,m,n}M_{-ak}T_{-cm}M_{dn}\calF PT_{dn}M_{bl-cm}g(\omega) |\\
 &\le \sum_{k,l,m,n\in\Z}|c_{k,l,m,n}|\cdot T_{-cm} |\calF PT_{dn}M_{bl-cm}g(\omega) | \\
 &\le
 \|g\|_{M^\infty(\R)}\sum_{k,l,m,n\in\Z}|c_{k,l,m,n}|\cdot T_{-cm}\phi_2(\omega),
\end{aligned}
\end{equation}
where $\phi_2\in\calS(\R)$ is some positive function, such that $|\calF PT_{dn}M_{bl-cm}g(\omega) |\le\|g\|_{M^\infty(\R)}\phi_2(\omega)$ after Lemma~\ref{lemma:lemma1}.
We denote the expression on the right-hand side of ~\eqref{eq:phi-2}, by
\[\Phi_2(x)=\sum_{k,l,m,n}|c_{k,l,m,n}|\,T_{-cm}\phi_2(x),\]
and prove in a similar fashion that $\Phi_2(x)=O(x^{-s}),s>2$.
\end{proof}

 \textit{Proof of Theorem} \ref{thm:mainresult}.
For $m,n\in\Z$ and $\lambda=M(m,n)^T $, we observe that
\begin{eqnarray*}
 H_\lambda &=& T_{(a_1-d_1) m+(a_2-d_2)n}M_{c_1 m+c_2n}
 \\ && \quad H_0 T_{d_1 m+d_2n} M_{(b_1-c_1) m+(b_2-c_2)n}.
\end{eqnarray*}
%
Set
$ g_{m,n} =H_0 T_{d_1 m+d_2n} M_{(b_1-c_1) m+(b_2-c_2)n}g$. 
Since  $\eta_{H_0}\in M^1_s(\R)$, $s>2$, Lemma~\ref{lm:estimate} implies the existence of $\phi_1(x)=O(x^{-s}),\phi_2(\omega)=O(\omega^{-s}),s>2$
such that
\begin{equation}\label{eq:estimates1}
\begin{aligned}
|g_{m,n}(x)|&
<\phi_1(x)\|g\|_{M^\infty(\R)},\\
|\calF g_{m,n}(\omega)|&
<\phi_2(\omega)\|g\|_{M^\infty(\R)}.
\end{aligned}
\end{equation}
To prove the claim of the theorem, we show that the family
\[\{H_\lambda g\}_{\lambda\in\Lambda}=\{ T_{(a_1-d_1) m+(a_2-d_2)n}M_{c_1 m+c_2n}g_{m,n}\}_{m,n\in \Z^2}\subseteq L^2(\R)\]
is a set of Gabor molecules \cite[Definition 10]{BCHL06b}) localized with respect to the lattice
\[
\tilde\Lambda=\begin{pmatrix} a_1-d_1&a_2-d_2\\
c_1&c_2
\end{pmatrix}\Z^{2}.
\]
For that purpose it suffices to show the existence of
$\Psi\in W(C,\ell^2)$, such that
$|\langle g_{m,n},T_xM_\omega\gamma\rangle|<\Psi(x,\omega)$ for all
$(m,n)\in\Z^2, (x,\omega)\in\R^{2}$. 
Here $\gamma(t)=e^{-t^2}$ and $W(C,\ell^2)$ denotes the Wiener amalgam space consisting of all continuous functions $f$ on $\R^2$ such that the norm 
\begin{equation*}
 \|f\|_{W(C,\ell^2)}=\left(\sum_{m\in\Z^2}\esssup_{z\in[0,1)^2}|f(z+m)|^2\right)^{\frac12}<\infty.
\end{equation*}
Note \eqref{eq:estimates1} implies
\begin{equation*}
\begin{aligned}
|\langle g_{m,n}, T_x M_{\omega}\gamma\rangle|&
\le\langle |g_{m,n}|,T_{x}|\gamma|\rangle=|g_{m,n}|\ast\gamma(x)\\
&\le \|g\|_{M^\infty(\R)}\cdot\phi_1\ast \gamma(x)\, , \\
|\langle \calF g_{m,n}, M_{-x} T_{\omega}\gamma\rangle|&\le\langle |g_{m,n}|,T_{\omega}|\gamma|\rangle=|g_{m,n}|\ast\gamma(\omega)\\ & \le
\|g\|_{M^\infty(\R)}\cdot \phi_2\ast\gamma(\omega),
\end{aligned}
\end{equation*}
see \cite{Gro04,Pfa08}.
Hence, by setting
\[h(t)=\|g\|_{M^\infty(\R)}\max\{\phi_1\ast \gamma(t),\phi_1\ast \gamma(-t),\phi_2\ast\gamma(t), \phi_2\ast\gamma(-t)\},\]
we obtain
\[|\langle g_{m,n},T_xM_\omega\gamma\rangle|\le h(\max\{|x|,|\omega|\})=h(\| (x,\omega)\|_\infty)\] 
with
$|h( t)|=O(t^{-s}),s>2$.
Hence, there exists a constant $c$ such that
\[|\langle g_{m,n},T_xM_\omega\gamma\rangle|\le c\cdot h(\| (x,\omega)\|)\] that can be bounded in turn pointwise by a function $\Psi(x,\omega)\in W(C,\ell^2)$. Thus, $\{H_\lambda g\}_{\lambda\in\Lambda}$ is a set of Gabor molecules.

Assume that $\spa(H_0,\Lambda)$ is identifiable by $g\in M^\infty(\R)$.
By employing \cite[Theorem 8]{BCHL06b} and \cite[Theorem 3]{BCHL06a} (the latter result is a Gabor molecule extension of Theorem~\ref{thm:identification1stExample}) we obtain that the Beurling density of $\tilde\Lambda$, given by
\begin{equation*}
 D(\tilde\Lambda)=\left|\det
\begin{pmatrix}
 a_1-d_1&a_2-d_2\\
c_1&c_2
\end{pmatrix}
\right|^{-1}
\end{equation*}must be less than 1 in order for $\{H_\lambda g\}_{\lambda\in\Lambda}$ to be Riesz in $L^2(\R)$.
A computation shows that this condition follows from $\bar D(\Lambda)>\sqrt{2}$, yielding the bound in Theorem \ref{thm:mainresult}.
\hfill $\square$

\begin{remark} \rm 
To obtain similarly a necessary density condition for $\{H_\lambda g\}_{\lambda\in\Lambda}$ to be a frame, we would have to
show that $|(a_1-d_1)c_2-(a_2-d_2)c_1|=D(\tilde\Lambda)^{-1}<1$ follows from $\bar D(\Lambda)>c$ for some positive constant $c$.
But this is not reasonable to expect, as increasing $b_1$ and/or $b_2$ greatly in 
\begin{equation*}
\begin{aligned}\bar D(\Lambda)=&\big[(a_1b_2-a_2b_1)^2+(a_1c_2-a_2c_1)^2\\ &\quad +(a_1d_2-a_2d_1)^2+(b_1c_2-b_2c_1)^2\\
&\quad +(b_1d_2-b_2d_1)^2+(c_1d_2-c_2d_1)^2\big]^{-1/2}.
\end{aligned}
\end{equation*}
would decrease $\bar D(\Lambda)$.
\end{remark}

\section{Examples of identifiable operator classes, design of identifiers}\label{section:sufficient}

To establish identifiability of $\spa(H_0,\Lambda)$ we seek  an identifier $g$ such
that any choice of coefficients $\{c_{\lambda}\}\in \ell^{2}(\Lambda)$ in $H=\sum_{\lambda\in\Lambda} c_{\lambda}H_\lambda$ can be computed from $H g$. 
Equivalently, we require that
$\{c_{\lambda}\}$ can be computed from the values of the Gabor coefficients  
$v_{\mu}=\langle H g,\pi_1(\mu)\gamma\rangle$, $\mu\in\mathcal M$,  where $(\gamma, \mathcal M)$ is an $L^2$-Gabor frame
for appropriately chosen window $\gamma\in L^2(\R)$ and lattice $\mathcal M\subset\R^2$. 
Then, our task is to solve the system of linear
equations
\begin{equation*}
  v_{\mu}=\langle H g,\pi_1(\mu)\gamma\rangle
    = \sum_{\lambda\in\Lambda}c_{\lambda}\,
     \langle H_\lambda g,\pi_1(\mu)\gamma\rangle
    =\sum_{\lambda\in\Lambda}A_{\mu;\lambda}\ c_{\lambda},\quad \mu\in\mathcal M
\end{equation*}
for $\{c_\lambda \}$. 
The doubly infinite matrix $A$ has entries $A_{\mu;\lambda}=\langle H_\lambda g,\pi_1(\mu)\gamma\rangle$.

If $g$ is such that the map 
$A\colon \ell^2(\Lambda)\rightarrow\ell^{2}(\mathcal M)$ has a bounded left inverse then $\spa(H_0,\Lambda) $ is identifiable. 

The design of identifiers $g$ can  be carried out on the coefficient level. 
In fact, when $(\tilde\gamma,\widetilde{ \mathcal M})$ is an appropriately chosen
Gabor frame for $L^2(\R)$, or, for example, an $\ell^\infty$-frame for $M^\infty(\R)$ \cite{AST01}, then we seek a coefficient sequence $\{d_{\widetilde \mu}\}$ so that the bi-infinite matrix with entries
\[A_{\mu;\lambda}
=\sum_{\widetilde \mu\in\widetilde {\mathcal M}}d_{\widetilde \mu} \langle H_{\lambda}\pi_1(\widetilde\mu)\widetilde\gamma
,\pi_1(\mu)\gamma\rangle\] is invertible.

To illustrate the method outlined above, we give an alternative proof of one implication in Theorem 3.1 in~\cite{KP06}. Also see \cite{Pfa11} for a comprehensive treatment of sampling and identification in operator Paley-Wiener spaces.

\begin{theorem} \label{th:example1}
The operator Paley-Wiener space $OPW^2([0,a]{\times}[0,\frac 1a])
$ 
is identifiable.
\end{theorem}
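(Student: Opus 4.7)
The plan is to produce an explicit identifier for $OPW^2([0,a]\times[0,\tfrac1a])$ and then verify that the associated evaluation map $H\mapsto Hg$ is, in fact, an isometry up to a constant factor. Motivated by the sampling interpretation of operator identification and by the fact that the rectangle $[0,a]\times[0,\tfrac1a]$ has area $1$, the natural candidate is the critically spaced Dirac comb $g=\sum_{n\in\Z}\delta_{na}$. One first checks (using the Gaussian short-time Fourier transform) that $g\in M^\infty(\R)$, so that by Remark (i) and \cite{Pfa07c} the output $Hg$ is a well-defined element of $L^2(\R)$ for every $H\in OPW^2([0,a]\times[0,\tfrac1a])$.

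Next I would compute $Hg$ from the spreading representation \eqref{eqn:operatorrepresentations}. Pairing the distribution $g(x-\cdot)$ against $\eta_H(\cdot,\nu)$ in the $t$-variable gives
\[
 Hg(x)=\sum_{n\in\Z}\int_0^{1/a}\eta_H(x-na,\nu)\,e^{2\pi i\nu\,na}\,d\nu.
\]
Because $\supp\eta_H\subseteq[0,a]\times[0,\tfrac1a]$, the condition $x-ma\in[0,a]$ picks out exactly one index $m=n$ when $x\in[na,(n+1)a)$, and writing $u=x-na\in[0,a)$ we obtain the clean decoupling
\[
 Hg(na+u)=F_u(na),\qquad F_u(\xi):=\int_0^{1/a}\eta_H(u,\nu)\,e^{2\pi i\nu\xi}\,d\nu.
\]
Thus $Hg$ restricted to the cell $[na,(n+1)a)$ encodes the sample at $na$ of the band-limited function $F_u$, whose Fourier transform is $\eta_H(u,\cdot)$, supported in an interval of length $\tfrac1a$.

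The final step is to apply Plancherel at the critical sampling density. For each fixed $u$, the identity
\[
 \sum_{n\in\Z}|F_u(na)|^2=\frac1a\int_0^{1/a}|\eta_H(u,\nu)|^2\,d\nu
\]
follows directly from expanding $\eta_H(u,\cdot)\in L^2([0,\tfrac1a])$ in the Fourier basis $\{\sqrt{a}\,e^{2\pi ina\xi}\}_{n\in\Z}$. Integrating over $u\in[0,a]$ and interchanging sum and integral yields
\[
 \|Hg\|_{L^2(\R)}^2=\int_0^a\sum_{n\in\Z}|F_u(na)|^2\,du=\frac1a\|\eta_H\|_{L^2(\R^2)}^2=\frac1a\|H\|_{HS}^2,
\]
so the evaluation map is an isometry up to the constant $1/\sqrt{a}$. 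In particular it is bounded and boundedly invertible on its range, and identifiability follows with the sharp constants $A=B=1/\sqrt{a}$.

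The only real subtlety I expect is the rigorous interpretation of $Hg$ when $g$ is a Dirac comb: one must justify exchanging the integrals in $t,\nu$ against the distribution $g(x-\cdot)$, and verify that the series for $Hg$ converges in $L^2$. The containment $g\in M^\infty(\R)$ together with the mapping property $OPW^2\to\mathcal L(M^\infty,L^2)$ (for compactly supported spreading functions) takes care of this, after which the argument reduces to nothing more than Parseval on a single period of length $\tfrac1a$.
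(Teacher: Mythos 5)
Your argument is correct, and it reaches a stronger conclusion (an exact isometry with constant $1/\sqrt{a}$) than mere identifiability. It does, however, take a genuinely different route from the paper. Both proofs use the same identifier $g=\delta_{a\Z}=\sum_n\delta_{na}$, but the paper deliberately illustrates the coefficient-level framework set up at the start of Section~5: it expands $\eta_H$ in the orthonormal basis $\{M_{k/a,a\ell}\chi_{[0,a]\times[0,1/a]}\}$, tests $Hg$ against the Gabor orthonormal basis $(a^{-1/2}\chi_{[0,a]},a\Z\times\tfrac1a\Z)$, and uses the Zak transform to show that the resulting bi-infinite matrix $A_{p,q;k,\ell}$ is the identity. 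You instead compute $\|Hg\|_{L^2}$ directly: the support condition decouples $Hg$ over the cells $[na,(n+1)a)$, and Parseval for the critically spaced exponentials $\{\sqrt{a}\,e^{2\pi i na\nu}\}_n$ on $[0,\tfrac1a]$ gives $\|Hg\|_{2}^2=\tfrac1a\|\eta_H\|_{2}^2$. This is essentially the original Kozek--Pfander computation; it buys sharp frame bounds $A=B=1/\sqrt{a}$ and avoids the matrix formalism, at the cost of not illustrating the general identifier-design method the section is built around. Your closing caveat is the right one: the formal pairing of the Dirac comb with $\eta_H\in L^2$ should be justified by first taking $\eta_H$ smooth and supported in the open rectangle and then passing to the limit using the uniform bound $\|Hg\|_{L^2}\lesssim\|H\|_{HS}\|g\|_{M^\infty}$ for $H\in OPW^2(S)$ with $S$ compact (the mapping property from \cite{Pfa07c} invoked in Remark~3.1(i)); with that in place the argument is complete. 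Note also that the single boundary overlap at $x=na$ (where both $m=n$ and $m=n-1$ could contribute) is a null set and does not affect the $L^2$ identity.
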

\begin{proof}
By definition the given operator space consists of
operators $H\in HS(\R)$ such that
\[\eta_H\in \spa\{M_{\frac ka, \ell a} \chi_{[0,a]{\times}[0,\frac 1a]},\ k,\ell \in\Z \},\]
that is, we consider
\[\Lambda=\begin{pmatrix} 0&0&0&\frac1a\\
0&0&a&0
\end{pmatrix}^T\Z^{2}.\]
Since $\{e^{2\pi i
(\frac{kx}a+a\ell y)}:(x,y)\in [0,a]{\times}[0,\frac 1a],k,\ell\in\Z\}$ forms
an orthonormal basis for the space $L^2([0,a]{\times}[0,\frac 1a])$ \cite{Kat76},
  the spreading function of any
operator $H\in OPW([0,a]{\times}[0,\frac 1a])$ has a unique expansion
\begin{equation*}
\eta_H=\sum_{k,\ell\in\Z}c_{k,\ell}M_{\frac k a,a\ell}\eta_0
\end{equation*}
with $\eta_0(x,\omega)=\chi_{[0,a]}(x)\chi_{[0,\frac 1a]}(\omega)$.

Set $c_{k,\ell,m,n}=c_{k,l}\delta_{0,0}(m,n)$.
We choose $\gamma=a^{-\frac12}\chi_{[0,a]}$ and observe that the Gabor system 
$(\gamma,a\Z\times \frac 1a\Z)$ is an orthonormal basis for $L^2(\R)$.
Using the  
formal relationship $\langle H g,f\rangle_{L^2(\R)}=\langle \eta_H,V_gf\rangle_{L^2(\R)}$  
\cite[Lemma 3.2]{GPR10} with $g=\delta_{a\Z}=\sum_{n\in\Z}\delta_{na}\in M^\infty(\R)$\footnote{Note that the inner product is still well-defined since $\eta_H$ has compact support.} we compute
\begin{equation*}
\begin{aligned}
A_{p,q;k,\ell}&=\langle H_{k,\ell}\delta_{a\Z},M_{\frac pa}T_{aq}\gamma\rangle \\
&=\langle T_{-aq,-\frac pa}M_{\frac ka -{\frac pa},a\ell}\eta_0, V_{\delta_{a\Z}}\gamma\rangle.\end{aligned}
\end{equation*}

The Zak transform $Z_a$ satisfies the relations
\[V_{\delta_{a\Z}}a^{-\frac12}\chi_{[0,a]}=Z_a a^{-\frac12}\chi_{[0,a]}\] and \[a^{-\frac12}Z_a\chi_{[0,a]}(x,\omega)=e^{2\pi i a\left[\frac xa\right]\omega},\]
for which we refer to \cite[Chapter 8]{Gro01}.
Then
\begin{equation*}
A_{p,q;k,\ell}
 =
\iint \chi_{[0,a]}(x+aq)e^{2\pi i\frac{(k-p)(x+aq)}a}\chi_{[0,\frac1a]}(\omega+\tfrac pa)e^{2\pi i\left(\omega+\frac pa\right)a\ell}e^{-2\pi ia\left[\frac xa\right]\omega}\,dx\,d\omega
\end{equation*}

We make the substitutions $y=x+aq,\, z=\omega+\tfrac{p}a$ and
note that since the integrand is nonzero for $aq\le x<aq+a$,
$\left[\tfrac xa\right]=q$, it follows that
\begin{equation*}
\begin{aligned}
A_{p,q;k,\ell} &=
 \int_0^a\int_0^{\frac 1a}e^{2\pi i\frac{(k-p)y}a}e^{2\pi i a\ell z-2\pi i aq( z-\frac pa)}\,dy\,d z\\
 &=
 e^{2\pi i (pq)}\int_0^a e^{2\pi i\frac{(k-p)y}a}dy\times \int_0^{\frac 1a} e^{2\pi ia(\ell-q) z}\,d z\\
 &= \delta_{p,q}(k,\ell),
\end{aligned}
\end{equation*}
where we used that   $\{e^{2\pi i\frac
nat}:n\in\Z\}$ and $\{e^{2\pi imat}:m\in\Z\}$ are orthonormal
bases for $L^2[0,a]$ and $L^2[0,\tfrac 1a]$, respectively.

The matrix $A=(A_{p,q;k,\ell})_{p,q;k,\ell\in\Z}$ is  the
identity, and thus invertible, which is what we had to prove.
%
%
\end{proof}

Recall that  
$V_g f(z)= \langle
f,\pi_1(z)g\rangle,\, z\in\R^2
$ is short-time Fourier transform of $f\in L^2(\R)$ with respect to the window $g\in L^2(\R)$ \cite[Chapter 3]{Gro01}.
An additional positive identifiability result is the following.

\begin{proposition}
Let $h,\{g_\lambda\}_{\lambda\in\Lambda}\in L^2(\R)$ be such that $\{g_\lambda\}$ is a Riesz sequence for its closed linear span in $L^2$, $\|h\|_{L^2(\R)}=1$. Then the operator family $\spa\{H_\lambda,\eta_{H_\lambda}=V_hg_\lambda\}$, is identifiable.
\end{proposition}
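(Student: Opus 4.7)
The natural candidate identifier is $g=h$ itself. The plan is to show that with this choice the evaluation map is even isometric: $\|Hh\|_{L^2(\R)}=\|H\|_{HS}$ for every $H\in\overline{\span}\{H_\lambda\}$. The whole argument hinges on the single identity
\[
 H_\lambda h \;=\; g_\lambda,\qquad \lambda\in\Lambda,
\]
which follows directly from the inversion formula for the short-time Fourier transform: since $\|h\|_{L^2}=1$, every $f\in L^2(\R)$ satisfies the weak reproducing formula $f=\iint V_hf(t,\nu)\,\pi_1(t,\nu)h\,dt\,d\nu$. Applying this to $f=g_\lambda$ and using $\eta_{H_\lambda}=V_hg_\lambda$ together with the spreading-function representation $H_\lambda h=\iint\eta_{H_\lambda}(t,\nu)\pi_1(t,\nu)h\,dt\,d\nu$ yields precisely $H_\lambda h=g_\lambda$.

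The second ingredient is that, because $\|h\|_{L^2}=1$, the map $V_h:L^2(\R)\to L^2(\R^2)$ is an isometry (orthogonality relation for the STFT). Consequently $\{V_hg_\lambda\}_{\lambda\in\Lambda}=\{\eta_{H_\lambda}\}_{\lambda\in\Lambda}$ is a Riesz sequence in $L^2(\R^2)$ with exactly the same Riesz bounds as $\{g_\lambda\}$ in $L^2(\R)$. In particular, for any $\{c_\lambda\}\in\ell^2(\Lambda)$,
\[
 \Bigl\|\sum_{\lambda}c_\lambda\,V_hg_\lambda\Bigr\|_{L^2(\R^2)}
 \;=\;\Bigl\|V_h\!\sum_{\lambda}c_\lambda g_\lambda\Bigr\|_{L^2(\R^2)}
 \;=\;\Bigl\|\sum_{\lambda}c_\lambda g_\lambda\Bigr\|_{L^2(\R)},
\]
and the sums converge unconditionally in the respective spaces. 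This shows simultaneously that $(H_0,\Lambda)$ is a Riesz sequence in $HS$, with $\|\sum c_\lambda H_\lambda\|_{HS}=\|\sum c_\lambda g_\lambda\|_{L^2}$.

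To conclude, fix $\{c_\lambda\}\in\ell^2(\Lambda)$, set $H=\sum c_\lambda H_\lambda\in HS$, and note that $Hh=\sum c_\lambda H_\lambda h=\sum c_\lambda g_\lambda$ in $L^2(\R)$; the one technical point is the exchange of summation and the action on $h$, which follows from the continuity estimate $\|(H-H_N)h\|_{L^2}\leq\|H-H_N\|_{HS}\|h\|_{L^2}$ applied to finite partial sums $H_N$. Combining this with the previous paragraph,
\[
 \|Hh\|_{L^2(\R)}\;=\;\Bigl\|\sum_\lambda c_\lambda g_\lambda\Bigr\|_{L^2(\R)}\;=\;\|H\|_{HS},
\]
so $h\in L^2(\R)\subset M^\infty(\R)$ identifies $\overline{\span}\{H_\lambda\}$. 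There is really no serious obstacle: everything reduces to STFT inversion plus the isometry of $V_h$, both of which are standard. The only point requiring care is the passage from convergence of $\sum c_\lambda H_\lambda$ in $HS$ to convergence of $\sum c_\lambda H_\lambda h$ in $L^2$, which is handled by the Hilbert–Schmidt to operator norm inequality.
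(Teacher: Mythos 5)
Your proof is correct. The key identity is the same one the paper relies on: with $\eta_{H_\lambda}=V_hg_\lambda$, the operator $H_\lambda$ is the rank-one map $f\mapsto\langle f,h\rangle\,g_\lambda$ (your STFT-inversion computation $H_\lambda h=\iint V_hg_\lambda(z)\,\pi_1(z)h\,dz=g_\lambda$ is exactly this fact specialized to $f=h$, using $\|h\|_{L^2}=1$), and the isometry of $V_h$ transfers the Riesz bounds of $\{g_\lambda\}$ to $\{\eta_{H_\lambda}\}$. Where you differ from the paper is in the packaging: the paper stays inside its general identification framework, taking an identifier $g$ synthesized from a Gabor system, writing the matrix $A_{\mu;\lambda}=\langle H_\lambda g,\gamma_\mu\rangle=\langle g_\lambda,\gamma_\mu\rangle\langle g,h\rangle$ against a system $\{\gamma_\mu\}$ biorthogonal to $\{g_\lambda\}$, and concluding invertibility of the resulting diagonal matrix. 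You instead make the concrete choice $g=h$, which turns the evaluation map into an exact isometry $\|Hh\|_{L^2}=\|H\|_{HS}$ with no biorthogonal system needed; this is cleaner, gives optimal constants, and makes the convergence bookkeeping (via $\|(H-H_N)h\|_{L^2}\le\|H-H_N\|_{HS}\|h\|_{L^2}$) transparent. The paper's formulation buys slightly more generality --- it shows any $g$ with $\langle g,h\rangle\neq 0$ works and illustrates the matrix-design method advertised in that section --- but for the statement as given your argument is complete. One cosmetic remark: the family $\{H_\lambda\}$ here need not be of the lattice form $(H_0,\Lambda)$ from the paper's equation for $\eta_{H_\lambda}=\pi_2(\lambda)\eta_0$, so you should phrase your intermediate conclusion as ``$\{H_\lambda\}$ is a Riesz sequence in $HS$'' rather than ``$(H_0,\Lambda)$ is a Riesz sequence.''
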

\begin{proof}
The Riesz sequence property of $\{g_\lambda\}$ implies that $\{\eta_{H_\lambda}\}$ is a Riesz sequence as well since it is the image of a Riesz sequence under the unitary map $V_h:g_\lambda\mapsto \eta_{H_\lambda}$ \cite{Chr03}. Consider the associated action of the operator $H_\lambda$ on $g$: $H_\lambda g=g_\lambda\langle g,h\rangle$. Therefore, the entries of the matrix $A$ have the form
\begin{equation}\label{eq26}
\begin{aligned}
A_{\mu;\lambda}&=\sum_{\tilde \mu\in\tilde\calM}d_{\tilde \mu}\langle H_\lambda\pi_1(\tilde\mu)\tilde\gamma, \gamma_\mu\rangle =\sum_{\tilde \mu\in\tilde\calM}d_{\tilde \mu}\langle g_\lambda, \gamma_\mu\rangle\langle\pi_1(\tilde\mu)\tilde\gamma,h\rangle.\end{aligned}
\end{equation}
Whenever $\{\gamma_\mu\}$ is chosen biorthogonal to $\{g_\lambda\}$, \eqref{eq26} becomes
\[A_{\mu;\lambda}=\delta(\mu-\lambda)\sum_{\tilde \mu\in\tilde\calM}d_{\tilde \mu}\langle\pi_1(\tilde\mu)\tilde\gamma,h\rangle,\]
which shows that the matrix $A$ is diagonal with non-zero entries for appropriate $\{d_{\tilde\mu}\}$. Hence, $A$ is invertible.\end{proof}

The following examples address more involved rank-2 lattices, for simplicity of calculation, we will consider Gaussian kernels only.

\begin{example}\label{prop:cased6gaussian}
Let $H_0$ be given by $\kappa_0(x,\omega)=e^{-\pi(x^2+\omega^2)}$, that is, $\eta_0(t,\nu)=\frac1{\sqrt2}e^{-\pi i\sqrt{2}
t\nu}e^{-\frac{\pi}2(t^2+\nu^2)}$.
\begin{enumerate} \item Let
$\Lambda=\left(\begin{smallmatrix}
\alpha &0&0&0\\
0 &\beta&\alpha&0\end{smallmatrix}\right)^T\Z^2$. If $\alpha,\beta$ are such
that $|\alpha(\beta+\alpha\sqrt2)|\ge\sqrt2,|\alpha\beta|>\sqrt2,|\alpha|>1$,
then the operator family $ \spa(H_0,\Lambda)$ is identifiable.

\item Let
$\Lambda=\left(\begin{smallmatrix}
{\alpha} &0&0& 0 \\
0&0&\alpha&\beta
\end{smallmatrix}\right)^T\Z^2$. If $\alpha,\beta$ are such that
$|\alpha|>1$ and $\frac{\beta\sqrt2}\alpha\in\Q$, then the operator
family $ \spa(H_0,\Lambda)$ is
identifiable.
\end{enumerate}
\end{example}

\begin{figure}
\begin{center}
\includegraphics[width=9cm]{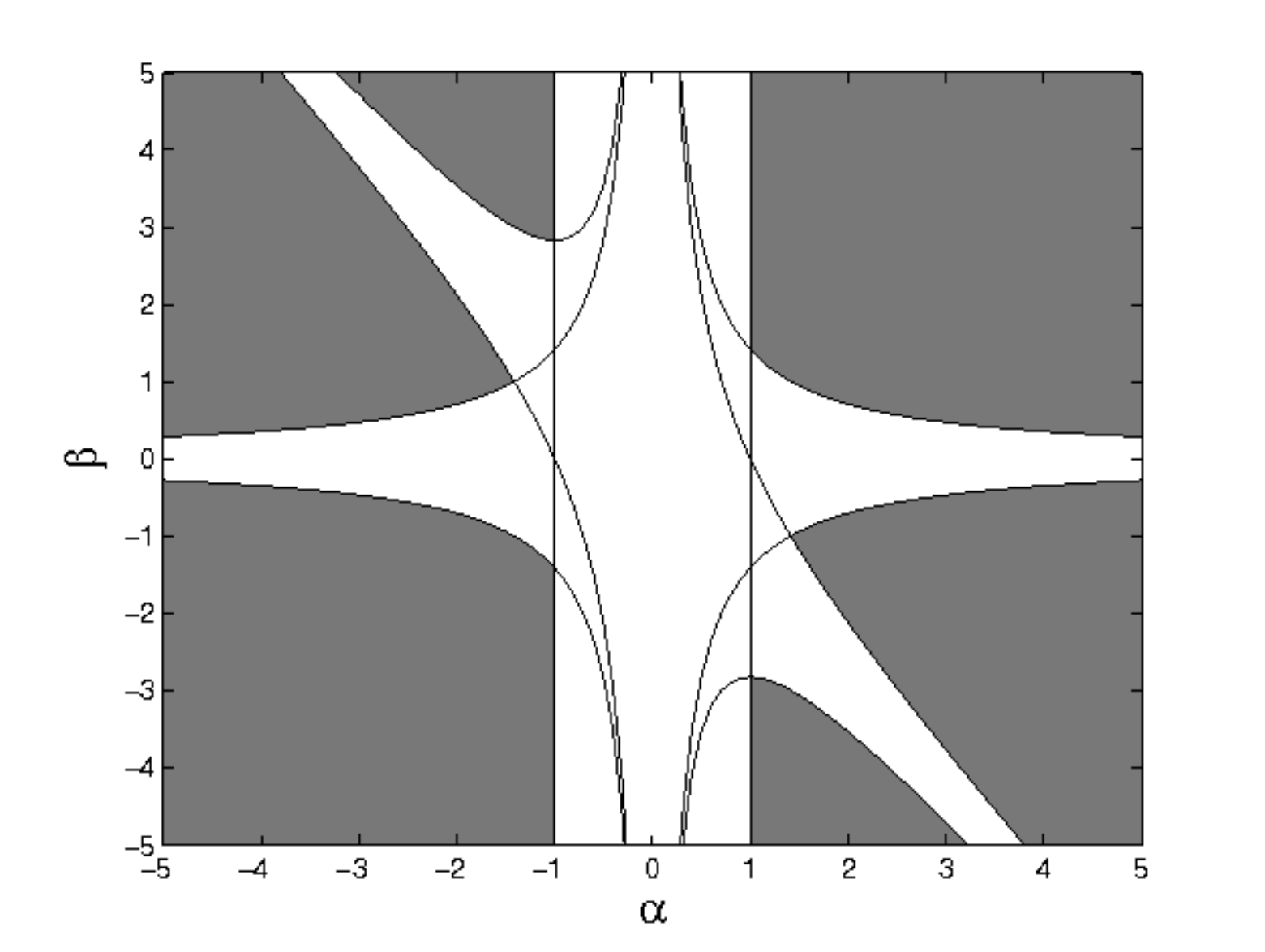}
\end{center}
\caption{The set $(\alpha,\beta)$ fulfilling the conditions in Example~\ref{prop:cased6gaussian}, Statement 1, is represented by the shaded region. The region encompasses the intersection of the convex hull of the parabolas $|\alpha(\beta+\sqrt2\alpha)|\ge\sqrt2$, $|\alpha\beta|>2$ with the subset of the plane $|\alpha|>1$.} \label{figure:hyperbola1}
\end{figure}

The pairs $(\alpha,\beta)$ satisfying the conditions in Example~\ref{prop:cased6gaussian}, Statement 1, are illustrated in Figure \ref{figure:hyperbola1}. Statement 2 of Example~\ref{prop:cased6gaussian} indicates that identifiability may depend on conditions that are not expressible in terms of density or simple inequalities. When $\frac{\beta\sqrt2}\alpha\notin\Q$, the Gabor system $(\eta_0,\Lambda)$ is not a Riesz sequence, so the problem is not considered in this paper.

The following example shows that $\bar D(\Lambda)$ being small does not necessarily guarantee identifiability of $ \spa(H_0,\Lambda)$.
\begin{example}\label{prop:casenotidef}
Let $H_0$ be given by $\eta_0\in M_s^1(\R^2)$, $ s>2$, and let
$\Lambda=\left(\begin{smallmatrix}
0&0&0&\beta\\
{\alpha} &\beta&0& 0
\end{smallmatrix}\right)^T\Z^2$. If $|\alpha\beta|<1$, then the operator
family $ \spa(H_0,\Lambda)$ is not
identifiable.
\end{example}

The condition $|\alpha\beta|<1$ cannot be expressed in terms of 2-Beurling density of the index set $\Lambda$, $\bar D(\Lambda_i)=\frac1{|\beta|\sqrt{\alpha^2+\beta^2}}$. 
In fact, for any $\epsilon>0$, we can find $\alpha,\beta$ with $|\alpha\beta|<1$ such that $\bar D(\Lambda)<\epsilon$. 
For instance, when $\alpha=10^{10},\beta=(10^{10}+1)^{-1}$, $|\alpha\beta|<1$, so
the family $ \spa(H_0,\Lambda)$ is not identifiable, but $\bar D(\Lambda)\approx10^{-20}$ is very small.

Further examples of identification using the approach described in this paper are given in \cite[Sections 4.4 - 4.6]{GPR10}.


%
%

%
%


%
%








\end{document}